\documentclass[12pt,reqno]{amsart}

\setlength{\columnseprule}{0.4pt}
\setlength{\topmargin}{-1.5pt}
\setlength{\oddsidemargin}{0.cm}
\setlength{\evensidemargin}{0.cm}
\setlength{\textheight}{23.5cm}
\setlength{\textwidth}{16.0cm}

\usepackage{amsfonts,amsmath,amsthm,amssymb}
\usepackage{cases}
\usepackage[usenames]{color}
\usepackage[dvipdf]{graphicx}

\usepackage{enumerate} 


\pagestyle{plain}

\theoremstyle{plain}
\newtheorem{thm}{Theorem}[section]

\newtheorem{lem}[thm]{Lemma}

\newtheorem{prop}[thm]{Proposition}
\theoremstyle{remark}
\newtheorem{rem}{\bf{Remark}}
\numberwithin{equation}{section}




\newcommand{\N}{\mathbb{N}}

\newcommand{\R}{\mathbb{R}}
\newcommand{\bS}{\mathbb{S}}
\newcommand{\T}{\mathbb{T}}
\newcommand{\Z}{\mathbb{Z}}

\newcommand{\cF}{\mathcal{F}}

\newcommand{\AC}{{\rm AC\,}}

\newcommand{\BUC}{{\rm BUC\,}}

\newcommand{\Lip}{{\rm Lip\,}}





\newcommand{\al}{\alpha}
\newcommand{\gam}{\gamma}
\newcommand{\del}{\delta}
\newcommand{\ep}{\varepsilon}
\newcommand{\kap}{\kappa}
\newcommand{\lam}{\lambda}
\newcommand{\sig}{\sigma}
\newcommand{\om}{\omega}

\newcommand{\Gam}{\Gamma}
\newcommand{\Lam}{\Lambda}
\newcommand{\Om}{\Omega}

\newcommand{\ol}{\overline}
\newcommand{\ul}{\underline}
\newcommand{\pl}{\partial}
\newcommand{\supp}{{\rm supp}\,}

\newcommand{\dist}{{\rm dist}\,}
\newcommand{\Div}{{\rm div}\,}

\newcommand{\tr}{{\rm tr}\,}


\begin{document}
\title[Asymptotic speed of solutions to birth and spread type nonlinear PDEs]
{Existence of asymptotic speed of solutions to birth and spread type nonlinear partial differential equations}

\date{\today}

\thanks{
The work of YG was partially supported by Japan Society for the Promotion of Science (JSPS) through grants KAKENHI \#26220702,  \#16H03948.
The work of HM was partially supported by KAKENHI \#15K17574, \#26287024, \#16H03948.
The work of HT was partially supported by NSF grant DMS-1664424.}

\author[Y. Giga]{Yoshikazu Giga}
\address[Y. Giga]{
Graduate School of Mathematical Sciences, 
University of Tokyo 
3-8-1 Komaba, Meguro-ku, Tokyo, 153-8914, Japan}
\email{labgiga@ms.u-tokyo.ac.jp}

\author[H. Mitake]{Hiroyoshi Mitake}
\address[H. Mitake]{
Graduate School of Mathematical Sciences, 
University of Tokyo 
3-8-1 Komaba, Meguro-ku, Tokyo, 153-8914, Japan}
\email{mitake@ms.u-tokyo.ac.jp}

\author[T. Ohtsuka]{Takeshi Ohtsuka}
\address[T. Ohtsuka]{
Division of Mathematical Sciences, Graduate School of Engineering, 
Gunma University, 4-2 Aramaki-cho, Maebashi, 371-8510, Japan}
\email{tohtsuka@gunma-u.ac.jp}

\author[H. V. Tran]{Hung V. Tran}
\address[H. V. Tran]
{Department of Mathematics,
University of Wisconsin,
480 Lincoln Dr.,
Madison, WI 53706, USA.}
\email{hung@math.wisc.edu}

\keywords{Asymptotic speed; Birth and spread type nonlinear PDEs; Fully nonlinear parabolic equations; Forced Mean Curvature Flow; Truncated Inverse Mean Curvature Flow; Crystal growth; Volcano formation model}

\subjclass[2010]{
35B40, 
35K93, 
35K20. 
}

\begin{abstract}
In this paper, we prove the existence of asymptotic speed of solutions to  
fully nonlinear, possibly degenerate parabolic partial differential equations in a general setting. 
We then give some explicit examples of equations in this setting and study further properties of the asymptotic speed for each equation.
Some numerical results concerning the asymptotic speed are presented.
\end{abstract}

\maketitle


\section{Introduction}
This is a continuation of \cite{GMT}, where we discussed large time average of solutions to a model equation in the crystal growth theory to be described in Section \ref{sec:birth-spread}.
Motivated by this work, in this paper, we study a fully nonlinear, possibly degenerate parabolic partial differential equation (PDE) of the type
\begin{numcases}
{{\rm(C)}\qquad}
u_t+F(Du,D^2u)
=f(x) & in $\R^n\times(0,\infty)$, \nonumber \\
u(\cdot,0)=u_0 & on $\R^n$,  \nonumber 
\end{numcases}
where $u:\R^n\times[0,\infty)\to\R$ is a unknown function, and 
$u_t$, $Du$ and $D^2 u$ denote the time derivative, the spatial gradient and Hessian of $u$, respectively. 
Here $F:(\R^n\setminus\{0\}) \times \bS^n \to\R$ is a given continuous function, where $\bS^n$ denotes the space of $n \times n$ 
real symmetric matrices. 
We assume further that $F$ is \textit{degenerate elliptic}, that is,
\[
F(p,X+Y) \leq F(p,X) \quad \text{ for all } p \in \R^n \setminus \{0\}, \ X,Y \in \bS^n \text{ with } Y \geq 0,
\]
and $F_\ast(0,0)=F^\ast(0,0)=0$, where we denote by $F_\ast, F^\ast$ 
 the upper and lower semicontinuous envelope of $F$, respectively (see \cite{CIL, G-book} for definitions). 
Typical examples of $F$  we have in our mind are the ones appearing in the level set approach for surface evolution equations. 

The function $f:\R^n \to [0,\infty)$ on the right hand side of (C) is called a \textit{source term} in the paper, 
which is assumed to be Lipschitz continuous and 
have a compact support. The following condition is often set in the paper
\begin{equation}\label{f-con}
f \in C^1_c(\R^n) \text{ and there exists $R_0>0$ such that $\supp(f) \subset B(0,R_0)$.}
\end{equation}
We also suppose that the initial condition $u_0:\R^n\to\R$ is in $\BUC(\R^n)$, 
where $\BUC(\R^n)$ is the set of bounded uniformly continuous functions on $\R^n$.
We are always concerned with viscosity solutions in this paper, and the term ``viscosity" is omitted henceforth.

The well-posedness (existence, comparison principle and stability results) for (C) is well established
in the theory of viscosity solutions under suitable assumptions (see \cite{CIL, G-book} for instance). 
Our main goal in this paper is to study the large time average of $u$ as $t \to \infty$, that is,
\begin{equation}\label{goal}
\lim_{t\to \infty} \frac{u(x,t)}{t}, \qquad \text{ for each 
} x \in \R^n.
\end{equation}
We call the limit in \eqref{goal} the \textit{asymptotic speed} of the solution $u$ to (C) if it exists. 
This question is important as a starting point to study qualitative and quantitative behaviors of $u(x,t)$ as $t \to \infty$.

A common strategy in the literature to obtain \eqref{goal} is to construct appropriate barriers 
by using subsolutions and supersolutions to (C) which have the same asymptotic speed. 
Let us briefly 
describe this strategy in periodic homogenization theory
by assuming that $f$ is $\Z^n$-periodic instead of \eqref{f-con} for the moment.
Note that under this periodic situation, 
\eqref{f-con} does not hold unless $f \equiv 0$.
Because of the periodic structure, one is able to study the following ergodic (cell) problem
\[
{\rm (E)} \qquad F(Dv,D^2v) = f(x) +c \quad \text{ in } \T^n:= \R^n/\Z^n.
\]
Here $(v,c) \in C(\T^n) \times \R$ is a pair of unknowns.
Under some appropriate assumptions, we can show that there exists a unique constant $c\in \R$
so that (E) has a solution $v\in C(\T^n)$ (see \cite{LS} for example). 
This yields that
$v(x) + C -ct$ is a solution to (C) with initial data $v(x)+C$ for any $C\in \R$.
Set $C_1 =\|v\|_{L^\infty(\T^n)} +\|u_0\|_{L^\infty(\R^n)}$.
By the comparison principle for (C), it is straightforward to see that
\[
v(x) - C_1 - ct \leq u(x,t) \leq v(x)+C_1 -ct \quad \text{ for all } (x,t) \in \R^n \times [0,\infty),
\]
which clearly 
implies that
\[
\lim_{t \to \infty}  \frac{u(x,t)}{t} = -c \quad \text{ for each 
} x \in \R^n.
\]
The simple fact that 
$\| v \|_{L^\infty (\mathbb{R}^n)}=\| v \|_{L^\infty (\mathbb{T}^n)} < \infty$
plays a crucial role here.

As already noted, the source term $f$ in this paper satisfies \eqref{f-con} and is compactly supported in $B(0,R_0)$, 
which means that (C) does not have a periodic structure of any sort
and that there is no corresponding cell/ergodic problem in a compact set.
The above approach (though quite natural and general) therefore is not applicable in this setting.

In this paper, we develop a method to prove the existence of the asymptotic speed for solution of (C) under quite general assumptions ((A1)--(A3) in Section \ref{sec:existence}) in Theorem \ref{thm:main}. 
We put this in an abstract framework as our approach is quite general. 
A key point is to keep track of $m(t)=\sup_{x\in \R^n} u(x,t)$ for $t\geq 0$
and show that $m$ is subadditive in time $t$. 
A similar idea in the periodic setting appears in \cite[Section 10.3]{B}. 
Then, in Section \ref{sec:application}, we show that (A1)--(A3) hold true for three classes of equations by deriving a global Lipschitz bounds: first-order Hamilton-Jacobi equations, a forced mean curvature flow (a crystal growth model), and a truncated inverse mean curvature flow (a volcano formation model).
Thus, we have existence of asymptotic speed for solutions to these equations. 
For overview of the theory of large time behavior for fully nonlinear equations 
the readers are referred to \cite{B, LMT, Giga-ICM}, and papers cited there.

In Section \ref{sec:speed}, we study qualitative properties of asymptotic speed of each equation pointed out in Section \ref{sec:application}. 
The asymptotic speed of solutions to first-order Hamilton-Jacobi equations and truncated inverse mean curvature flow is completely characterized in Subsection \ref{subsec:V-pos}.
For forced mean curvature flow, it is harder to analyze the asymptotic speed of its solution.
In the radially symmetric setting, we give a complete and satisfactory characterization  in Subsection \ref{subsec:rad}.  
In non-radially symmetric settings, 
the situation seems much more complicated and 
we obtain some partial results in Subsection \ref{subsec:non-rad}. 
We present some numerical results in Section \ref{sec:numerical}, 
and indicate further questions 
which still remain open.
Finally, in Appendix, we introduce a volcano formation model, and discuss some background on inverse mean curvature flow.
Some typical results of this paper have been announced in \cite{Giga-ICM}.

\subsection*{Acknowledgement} The authors thank Professors Fumio Nakajima for giving us papers \cite{M1878, B1885} by J.\ Milne and G.\ F.\ Becker as well as useful information on volcanoes' shapes.
 The authors also thank Professor Takehiro Koyaguchi for giving us valuable information on modern theory of volcanoes' formation.

\section{Birth and spread type nonlinear PDEs}\label{sec:birth-spread}
In this section, we derive a PDE of the form of (C) as a continuum limit of a birth and spread type model in the theory of crystal growth (see \cite[Section 2.6]{OR} for instance). 
From the continuum point of view, it is derived from the Trotter-Kato approximation as following.
Let $u_0$ be the given height of a crystal surface
at initial time immersed in a supersaturated medium.
We assume that there is no dislocation in the crystal lattice. 
Then, the crystal grows according to the following processes.  
\begin{enumerate}
 \item Birth: adatoms 
       (atoms on the surface) make a small ``terrace''
       on the surface by their concentration.
 \item Spread: the terraces evolve by catching adatoms.
\end{enumerate}
Fix a small time step $\tau>0$.
Within time $\tau$, the birth process starts in $B(0,R_0)$ 
with supersaturation rate $f(x)$ at each $x\in B(0,R_0)$,
and the crystal surface evolves vertically as 
the graph of $v_0 = u_0(\cdot) + \tau f(\cdot)$.
During the next short time $\tau$, 
the terraces evolve 
horizontally by the spread process,
which is described by the evolution of 
each level set of $v_0$
by the surface evolution equation 
\begin{equation}\label{eq:surface}
V=g(n(x),\kap(x)). 
\end{equation}
More precisely, let 
$D_0 = \{ x \in \mathbb{R}^n \,:\, \ v_0 (x) > c \}$
be a terrace of the surface at height $c \in \mathbb{R}$ 
and $\Gam_0 = \partial D_0 $
be 
the edge of the terrace after the birth process.
Then, the terrace evolves horizontally as 
$D_0^\tau = \{ x \in \mathbb{R}^n \,:\, \mathrm{dist} (x, D_0) < \tau V \}$
with \eqref{eq:surface}, where $\dist(x, D_0):=\inf\{|x-y| \,:\, y\in\pl D_0\}$ if $x\in \R^n\setminus \ol{D}_0$ and 
$\dist(x, D_0):=-\inf\{|x-y| \,:\, y\in\pl D_0\}$ if $x\in D_0$. 
Here, 
$g$ is a given function and $V$ is the outward normal velocity of $\Gam_0$. 
The functions $n(x)$, $\kappa(x)$, respectively, are the outer unit normal and mean curvature of $\Gam_0$ at $x \in \Gam_0$.
In the spread process, 
the above evolution occurs for all height $c \in \mathbb{R}$
as in the continuum sense,
and we obtain the height $u(\cdot, 2 \tau)$
of the crystal surface at $t = 2 \tau$.
Then, we return to the birth process by resetting
$u_0 = u(\cdot, 2 \tau)$.
By repeating 
the above processes, 
we obtain the Trotter-Kato approximation.

Let us describe this in a clear mathematical framework with a double-step method.
Consider two initial value problems 
\begin{numcases}
{{\rm (N)}\quad}
v_t=f(x)
 & in $\R^n\times(0,\infty)$, \nonumber \\
v(\cdot,0)=u_0 & in $\R^n$, \nonumber 
\end{numcases}
and 
\[
{\rm(P)}\quad 
\left\{
\begin{array}{ll}
\displaystyle
w_t=g\left(-\frac{Dw}{|Dw|}, \Div\Big(\frac{Dw}{|Dw|}\Big)\right)|Dw| 
 & \text{in} \ \R^n\times(0,\infty), \nonumber \\
w(\cdot,0)=u_0 & \text{in} \ \R^n, \nonumber 
\end{array}
\right.
\]
where $g:\R^n\times\R\to\R$ is a given function. 
Notice that the equation in (P) is the level set equation (see \cite{G-book}) for the surface evolution equation  $V=g(n(x),\kap(x))$ on $\Gam_t$. 
Under some suitable assumptions on $g$, the well-posedness for (P) holds.

We call (N) and (P) the \textit{nucleation problem} and the \textit{propagation problem}, respectively. 
Define the operators $S_1(t), S_2(t):\Lip(\R^n)\to\Lip(\R^n)$ by 
\begin{equation}\label{TK-1}
S_1(t)[u_0]:=u_0(\cdot)+t f(\cdot), \quad\text{and}\quad
S_2(t)[u_0]:=w(\cdot,t),   
\end{equation}
where $w$ is the unique viscosity solution of (P) with given initial data $u_0$. 
For $x\in\R^n$, small time step  $\tau>0$, and $i\in\N$, set 
\begin{equation}\label{TK-formula}
U^\tau(x,i\tau):=S_1(\tau)\big(S_2(\tau)S_1(\tau)\big)^{i}[u_0](x).  
\end{equation}
This is called the \textit{Trotter-Kato product formula} with value function $U^\tau(x, i\tau)$.
By using a general framework in \cite{BS}, for $t=i\tau>0$ fixed, uniqueness and stability yield that
\begin{equation}\label{TK:limit}
\lim_{\substack{i \to \infty\\ i\tau=t}} U^\tau(x,i\tau)= u(x,t) \quad\text{locally uniformly for} \ x\in\R^n, 
\end{equation}
and $u$ is the unique viscosity solution to (C) with 
\[
F(p,X):=- g\left(-\frac{p}{|p|}, \frac{1}{|p|}\tr\left(\left(I_n - \frac{p\otimes p}{|p|^2}\right)X \right)\right)  |p|, 
\] 
where $I_n$ is the identity matrix of size $n$, and for $Y \in \mathbb S^n$, $\tr Y$ denotes the trace of $Y$.
We call this equation a \textit{birth and spread type nonlinear partial differential equations} in the paper.  
From the derivation, we can see that the equation has double nonlinear effects coming from the interaction of the nucleation and the surface evolution. 

It is worth emphasizing that the geometric structure of the operator for $F$ is not necessarily required to obtain convergence result \eqref{TK:limit}. 
We have restricted the propagation problem to equation (P) just to simplify explanation of a birth and spread model.


\section{Existence of asymptotic speed} \label{sec:existence}
In this section, we provide a simple way to prove the existence of the asymptotic speed for the solution $u$ of (C) in an abstract way. 
The assumptions we put are in the general abstract form, which will be verified later for each situation.
In particular, we do not need to assume \eqref{f-con} here.
We assume the followings.
\begin{itemize}
\item[(A1)]  The comparison principle holds for (C) in the class of bounded functions on $\R^n \times [0,T]$ for each $T>0$. 
Moreover, for any given initial data $u_0 \in \BUC(\R^n)$, (C) has a viscosity solution $u \in C(\R^n \times [0,\infty))$ which is bounded on $\R^n \times [0,T]$ for each $T>0$.

\item[(A2)] For $u_0 \equiv 0$, the solution $u$ to (C) is uniformly continuous in the space variable $x$ for all $t\geq 0$, that is, 
there exists a continuous, increasing function $\omega:[0,\infty)\to[0,\infty)$ with $\omega(0)=0$ such that 
\[
|u(x,t)-u(y,t)|\le\omega(|x-y|)\quad\text{for all} \ x,y\in\R^n, t\ge0.
\]


\item[(A3)] For $u_0 \equiv 0$, let $u$ be the corresponding solution to (C). 
There exists $R_0>0$ such that for each $T>0$, we have 
\[
u(x_T,s_T) = \max_{\R^n \times [0,T]} u \quad\text{for some} \ (x_T,s_T) \in \ol{B}(0,R_0) \times [0,T].
\]
\end{itemize}

Let us first give a few comments about assumptions (A1)--(A3). 
While (A1)--(A2) are quite standard in the theory of viscosity solutions, (A3) looks a bit restrictive.
This turns out to be natural if we assume that $f$ satisfies \eqref{f-con} thanks to the maximum principle
and  the fact that $F$ is independent of $x$.

\begin{lem}\label{lem:f-A3}
Assume that {\rm (A1)} and \eqref{f-con} hold. Then {\rm (A3)} is valid.
\end{lem}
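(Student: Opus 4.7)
The plan is to prove $\sup_{\R^n\times[0,T]} u = \max_{\ol B(0,R_0)\times[0,T]} u$; the right-hand side is attained by continuity on the compact set, giving the required $(x_T,s_T)$. First, since $F_\ast(0,0)=0\le f$ and $u_0\equiv 0$, the constant $0$ is a subsolution of (C), so (A1) gives $u\ge 0$; set $M:=\max_{\ol B(0,R_0)\times[0,T]} u\ge 0$. It thus suffices to show $u\le M$ on $\R^n\times[0,T]$.

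My main step is a penalization argument. For small parameters $\delta,\epsilon>0$, I would consider
\[
\tilde u(x,t):=u(x,t)-\delta t-\epsilon\sqrt{1+|x|^2}.
\]
Since $u$ is bounded on $\R^n\times[0,T]$ (by (A1)) and $\sqrt{1+|x|^2}\to\infty$, the continuous function $\tilde u$ attains its maximum at some $(x^\ast,t^\ast)\in\R^n\times[0,T]$. If $t^\ast=0$ or $|x^\ast|\le R_0$, then $\tilde u(x^\ast,t^\ast)\le u(x^\ast,t^\ast)\le M$ immediately. Otherwise $t^\ast>0$ and $|x^\ast|>R_0$; setting $\Phi(x,t):=\delta t+\epsilon\sqrt{1+|x|^2}$, the function $u-\Phi$ has a maximum at $(x^\ast,t^\ast)$, so the viscosity subsolution property, combined with $f(x^\ast)=0$ (from \eqref{f-con}), yields
\[
\delta + F^\ast\bigl(D\Phi(x^\ast),D^2\Phi(x^\ast)\bigr)\le 0.
\]
Now $|D\Phi(x^\ast)|\le\epsilon$ and $\|D^2\Phi(x^\ast)\|\le C\epsilon$, uniformly in $x^\ast$, while $F_\ast(0,0)=F^\ast(0,0)=0$ implies $F(p,X)\to 0$ as $(p,X)\to(0,0)$ with $p\ne 0$; hence $F^\ast(D\Phi(x^\ast),D^2\Phi(x^\ast))\to 0$ uniformly as $\epsilon\to 0$. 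For $\epsilon$ small enough this forces $\delta/2\le 0$, a contradiction. Thus this third case cannot occur, $\tilde u\le M$ everywhere, and sending $\epsilon\to 0$ then $\delta\to 0$ at each fixed $(x,t)$ yields $u(x,t)\le M$.

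The main difficulty is precisely this third case, where three ingredients must cooperate: the compact support of $f$ (supplying $f(x^\ast)=0$), the strict shift $-\delta t$ (providing a positive margin $\delta$), and the assumption $F^\ast(0,0)=F_\ast(0,0)=0$ (making the cutoff's contribution to the $F^\ast$ term negligible for small $\epsilon$). Merely using the crude estimate $f\le\|f\|_\infty$ would only yield $u\le M+\|f\|_\infty T$, far weaker than what is needed.
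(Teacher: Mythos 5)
Your proof is correct and follows essentially the same penalized-maximum argument as the paper: both add a term of the form $-\delta t-\epsilon\sqrt{1+|x|^2}$ to localize the maximum and exploit $F_\ast(0,0)=F^\ast(0,0)=0$ together with $\supp f\subset B(0,R_0)$ to rule out a penalized maximizer outside $\ol{B}(0,R_0)$. The only (cosmetic) difference is that you phrase this as a contradiction giving $u\le M$, while the paper tracks the maximizer through the successive limits in the two penalty parameters; also note the subsolution test should formally invoke $F_\ast$ rather than $F^\ast$, though this is harmless here since $D\Phi(x^\ast)\ne 0$ makes $F$ continuous at that argument.
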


\begin{proof}
If $f\equiv 0$, then $u \equiv 0$ and there is nothing to prove.
We hence may assume that $f \not \equiv 0$. It is clear then that $u \geq 0$ and $u \not \equiv 0$.

Fix $T>0$ and set $\sig = \sup_{\R^n \times [0,T]} u >0$.
For $\ep,\del>0$ sufficiently small, there exists $(x_{\ep,\del}, t_{\ep,\del}) \in \R^n \times (0,T]$ such that
\[
u(x_{\ep,\del}, t_{\ep,\del}) = \max_{\R^n \times [0,T]} \left( u(x,t) - \ep t - \del (|x|^2+1)^{1/2} \right)>0.
\]
By the definition of viscosity subsolution, we have
\[
\ep + F\left(\del \frac{x_{\ep,\del}}{(|x_{\ep,\del}|^2+1)^{1/2}}, \del \frac{(|x_{\ep,\del}|^2+1)I_n- x_{\ep,\del} \otimes x_{\ep,\del}}{(|x_{\ep,\del}|^2+1)^{3/2}} \right) \leq f(x_{\ep,\del}).
\]
Let $\del \to 0$ first to deduce that $(x_{\ep,\del},t_{\ep,\del}) \to (x_\ep,t_\ep)$ by passing to a subsequence if necessary and $x_\ep \in \ol{B}(0,R_0)$
as $f=0$ on $\R^n \setminus B(0,R_0)$.
We then let $\ep \to 0$ to get the desired result.
\end{proof}

\begin{lem}\label{lem:A1-A2}
Assume that {\rm (A1)} holds. Let $u$ be the solution to {\rm (C)} with the initial data $u_0 \equiv 0$.
Then, $u$ is Lipschitz in time, and
\[
\|u_t\|_{L^\infty(\R^n \times [0,\infty))} \leq M,
\]
where $M=\max_{\R^n} f$.
\end{lem}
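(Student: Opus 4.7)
The plan is a two-step argument: first produce constant-in-space barriers $0 \leq u(x,t) \leq Mt$ for the solution with zero initial data, and then combine these with the autonomy of (C) in $t$ and the comparison principle (A1) to propagate the $Mh$-gap in time.

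First I would observe that $\underline{u} \equiv 0$ is a subsolution of (C) and $\overline{u}(x,t) := Mt$ is a supersolution. Indeed, for any smooth test function at a local extremum of $\underline{u}$ or $\overline{u}$, the relevant derivatives are $(\varphi_t, D\varphi, D^2\varphi)=(0,0,0)$ or $(M,0,0)$; using $F_\ast(0,0)=F^\ast(0,0)=0$ together with $0\le f(x)\le M$, one checks that $0+F^\ast(0,0)=0\le f(x)$ and $M+F_\ast(0,0)=M\ge f(x)$. Since $\underline{u}(\cdot,0)=0=u_0=\overline{u}(\cdot,0)$, the comparison principle (A1) yields
\[
0 \le u(x,t) \le Mt \qquad \text{for all } (x,t) \in \R^n\times[0,\infty).
\]

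Next, for arbitrary $h>0$ I would define $v(x,t):=u(x,t+h)$. Since (C) is autonomous in $t$ and the equation is invariant under addition of constants to the unknown (because $F$ depends only on $Du$ and $D^2u$), both $v(x,t)$ and $w(x,t):=u(x,t)+Mh$ are solutions to the PDE in $\R^n\times[0,\infty)$. By the first step,
\[
0 \;\le\; v(x,0) \;=\; u(x,h) \;\le\; Mh \;=\; w(x,0),
\qquad
v(x,0) \;=\; u(x,h) \;\ge\; 0 \;=\; u(x,0),
\]
so at the initial time $u(\cdot,0)\le v(\cdot,0)\le w(\cdot,0)$. Applying (A1) (all three functions are bounded on $\R^n\times[0,T]$ for every $T>0$) I get $u(x,t)\le u(x,t+h)\le u(x,t)+Mh$ for all $(x,t)$ and all $h>0$, hence
\[
0 \;\le\; u(x,t+h)-u(x,t) \;\le\; Mh.
\]
This gives the desired Lipschitz estimate $\|u_t\|_{L^\infty(\R^n\times[0,\infty))}\le M$, interpreted in the a.e.\ sense or, equivalently, as a Lipschitz bound in $t$ uniform in $x$.

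There is no real obstacle here, but the only subtle point is verifying that $0$ and $Mt$ are indeed sub/supersolutions at the possibly singular set $\{p=0\}$ of $F$; this is handled precisely by using $F_\ast(0,0)=F^\ast(0,0)=0$ in the definitions of sub/supersolutions. The rest is a direct application of (A1) and the time-translation invariance of the equation.
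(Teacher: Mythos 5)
Your proof is correct and is essentially the same as the paper's: both establish the barrier $0\le u(x,t)\le Mt$ via $F_\ast(0,0)=F^\ast(0,0)=0$ and then use time-translation invariance together with the comparison principle (A1) to propagate the estimate $|u(x,t+h)-u(x,t)|\le Mh$. Your version phrases the second step with the explicit supersolution $u+Mh$ and as a bonus extracts the monotonicity $u(x,t+h)\ge u(x,t)$, but this is the same argument.
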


\begin{proof}
 It is clear that $\varphi(x,t)=Mt$ for $(x,t)\in \R^n \times [0,\infty)$ is a supersolution to (C) 
 because of the fact that $F_\ast(0,0)=F^\ast(0,0)=0$.
 We use the comparison principle to get
\begin{equation}\label{bound-u}
0 \leq u(x,t) \leq Mt \quad \text{ for all } (x,t) \in \R^n \times [0,\infty).
\end{equation}
Thus, $\|u_t(\cdot,0)\|_{L^\infty(\R^n)} \leq M$.

For any given $s>0$, both $(x,t) \mapsto u(x,t+s)$ and $(x,t) \mapsto u(x,t)$ are viscosity solutions to (C)
with initial data $u(\cdot,s)$ and $u(\cdot,0)$, respectively. 
By the comparison principle in (A1) and \eqref{bound-u}, 
\[
\|u(\cdot,t+s) - u(\cdot,t)\|_{L^\infty(\R^n)} \leq \|u(\cdot,s) - u(\cdot,0)\|_{L^\infty(\R^n)} \leq Ms.
\]
Divide both sides of the above by $s$ and let $s\to 0+$ to get the conclusion.
\end{proof}

Below is one of our main results of this paper on the existence of asymptotic speed.
\begin{thm} \label{thm:main}
Assume that {\rm (A1)--(A3)} hold. 
Assume also that $M=\max_{\R^n}f$ exists and finite.
Let $u$ be the solution to {\rm (C)} with a given initial data $u_0 \in \BUC(\R^n)$.
There exists $c \in[0,M]$ such that
\begin{equation}\label{u-conv}
\lim_{t \to \infty} \frac{u(x,t)}{t} = c \quad \text{ locally uniformly for } x \in \R^n.
\end{equation}
Furthermore, $c$ is independent of the choice of $u_0$.
\end{thm}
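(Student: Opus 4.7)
The plan, following the hint in the introduction, is to analyze $m(t) := \sup_{x \in \R^n} v(x,t)$ for the solution $v$ of (C) with initial data $v_0 \equiv 0$, prove that $m$ is subadditive, apply Fekete's lemma, and upgrade the resulting limit to a locally uniform pointwise limit using (A2)--(A3). Independence of $u_0$ I would handle first and separately: since $F$ does not depend on $u$, adding a constant to a solution produces another solution, so the comparison principle (A1) gives $|u(x,t) - v(x,t)| \leq \|u_0\|_{L^\infty(\R^n)}$ for any $u_0 \in \BUC(\R^n)$, which is absorbed upon dividing by $t$.

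Now focus on $v$. Since $F_\ast(0,0)=0$ and $f \geq 0$, the constant $0$ is a subsolution, so $v \geq 0$; Lemma \ref{lem:A1-A2} gives $v \leq Mt$. Comparing the time shift $v(\cdot,\cdot+h)$ (with initial data $v(\cdot,h) \geq 0$) against $v$ via (A1) shows $v$ is non-decreasing in $t$; combined with (A3), this yields
\[
m(T) = v(x_T, T) \quad \text{for some } x_T \in \ol{B}(0,R_0), \ \text{for every } T > 0.
\]
For subadditivity, fix $s > 0$: both $(x,\tau) \mapsto v(x,\tau+s)$ and $(x,\tau) \mapsto v(x,\tau) + m(s)$ solve (C), with initial data $v(\cdot,s) \leq m(s)$ and $m(s)$ respectively, so (A1) yields
\[
v(x,\tau+s) \leq v(x,\tau) + m(s),
\]
and taking the supremum in $x$ gives $m(t+s) \leq m(t) + m(s)$. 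Since $m$ is continuous (Lemma \ref{lem:A1-A2}) and $m(0) = 0$, Fekete's subadditive lemma yields
\[
c := \lim_{t \to \infty} \frac{m(t)}{t} = \inf_{t > 0} \frac{m(t)}{t} \in [0,M].
\]

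The upgrade to a locally uniform pointwise limit is then clean. The upper bound $v(x,t)/t \leq m(t)/t \to c$ is automatic. For the lower bound, the spatial modulus $\omega$ from (A2) and the localization $x_T \in \ol{B}(0,R_0)$ give
\[
v(x,T) \geq v(x_T,T) - \omega(|x - x_T|) \geq m(T) - \omega(|x| + R_0),
\]
so $\liminf_{T \to \infty} v(x,T)/T \geq c$ uniformly on compact sets in $x$. The main subtlety — essentially the only point needing care — is that (A3) a priori identifies the maximum on the space-time slab $\R^n \times [0,T]$ rather than $m(T) = \sup_x v(x,T)$; the monotonicity of $v$ in $t$, which itself rests on $f \geq 0$, is what closes this gap and lets (A3) feed into the subadditivity-plus-modulus argument. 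Beyond that, everything is a straightforward packaging of (A1)--(A3).
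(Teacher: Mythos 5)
Your proposal is correct, and while it shares the paper's overall skeleton (subadditivity of $m$, Fekete's lemma, then upgrading via (A2)--(A3)), the upgrade step is genuinely different and in fact cleaner than the paper's. You first observe that $v \geq 0$ (since $0$ is a subsolution when $f \geq 0$) and therefore, comparing $v(\cdot,\cdot+h)$ against $v$ via (A1), that $v$ is non-decreasing in $t$; this converts (A3)'s maximizer over the space-time slab into a maximizer on the terminal slice, so that $m(T) = v(x_T,T)$ with $x_T \in \ol{B}(0,R_0)$, and then a single application of the spatial modulus (A2) gives $v(x,T) \geq m(T) - \omega(|x| + R_0)$, which delivers the lower bound locally uniformly. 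The paper instead does not invoke monotonicity: it keeps the time $s_t$ at which the slab maximum is achieved, bounds $s_t$ from below via the crude bound $u \leq Mt$ and then again via Fekete's rate, and finally controls $|u(x,t) - u(x_t,s_t)|$ using both (A2) and the Lipschitz-in-time bound from Lemma \ref{lem:A1-A2}; this forces a separate treatment of the case $c = 0$ (since the auxiliary bound $t > MT/c$ degenerates). Your monotonicity observation eliminates the need for the time-Lipschitz estimate, the tracking of $s_t$, and the case split, at the modest cost of making explicit the positivity $f \geq 0$ (the paper's standing assumption on the source). Both arguments handle independence of $u_0$ identically by absorbing $\|u_0\|_{L^\infty}$ via comparison.
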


\begin{proof}
Since the comparison principle holds, in order to prove \eqref{u-conv},
we can assume that $u_0 \equiv 0$. 
Recall that \eqref{bound-u} gives us
\[
0=u_0(x) \leq u(x,t) \leq Mt.
\]

For $t \ge 0$, set $m(t) = \sup_{x\in \R^n} u(x,t)$. We now show that
\begin{equation}\label{sub-add}
m(t+s) \leq m(t) +m(s) \quad \text{ for all } s,t \geq 0.
\end{equation}
Fix $s \geq 0$. We note that $(x,t) \mapsto v(x,t)=u(x,t+s)-m(s)$ and $(x,t) \mapsto u(x,t)$ are both solutions to (C),
and
\[
v(x,0) = u(x,s) - m(s) \leq 0 = u(x,0).
\]
Thus, $v(x,t) \leq u(x,t)$ in light of the comparison principle. In particular, we get that \eqref{sub-add} holds,
which means that $m$ is subadditive on $[0,\infty)$.
By Fekete's lemma, there exists $c \in [0,\infty)$ such that
\begin{equation}\label{lim-m}
\lim_{t \to \infty} \frac{m(t)}{t} = c=\inf_{s>0} \frac{m(s)}{s}.
\end{equation}
It is clear that $c \leq M$ because of \eqref{bound-u}.
If $c=0$, then \eqref{u-conv} holds immediately. 
We therefore only need to consider the case that $c>0$.
Fix $\ep>0$. There exists $T=T(\ep)>0$ such that
\[
c \leq \frac{m(t)}{t} \leq c+\ep \quad \text{ for all } t>T.
\]
For $t>\max\{\frac{MT}{c},M\}$, we use (A3) to have that
\begin{equation}\label{lower-s-t}
ct \leq \max_{\R^n \times [0,t]} u = u(x_t,s_t) \leq M s_t 
\quad\text{for some} \ (x_t,s_t) \in \ol{B}(0,R_0) \times [0,t],
\end{equation}
which implies that $s_t \geq \frac{ct}{M} \ge T$. Thus, we are able to improve \eqref{lower-s-t} as
\begin{equation}\label{better-s-t}
ct \leq \max_{\R^n \times [0,t]} u = u(x_t,s_t) \leq (c+\ep) s_t,
\end{equation}
which yields $s_t \geq \frac{c}{c+\ep}t$. So for any $x\in B(0,R)$ for $R>0$ given, we use (A2) and Lemma \ref{lem:A1-A2} to estimate that
\[
|u(x,t) - u(x_t,s_t)| \leq\omega(|x-x_t|) + C|t-s_t| \leq \omega(R+R_0) + \frac{C\ep t}{c+\ep}.
\]
Hence, for $t>\max\{\frac{MT}{c},M\}$,
\[
c - \frac{C\ep}{c+\ep} - \frac{\omega(R+R_0)}{t} \leq \frac{u(x,t)}{t} \leq c + \ep.
\]
The proof is complete.
\end{proof}

\begin{rem}
We note that the use of the Fekete lemma is quite natural in the literature once some subadditive quantities are identified.
A similar argument in the periodic setting appeared in a lecture note of Barles \cite{B} (see Section 10.3, the proof of Theorem 10.2). 
This is exactly the setting described in Introduction, and we can also obtain the same result by using the cell/ergodic problem.
In general, the lack of periodicity prevents us from using the natural compactness property of $\T^n$.
In a sense, (A3) is a compactness assumption, which  is a simple and effective replacement for the periodicity.
Furthermore, (A3) holds if we are in the periodic setting.
\end{rem}

We show next that (A1) also yields that 
if $u_0 \equiv 0$, then $u$ is Lipschitz continuous in the space variable $x$ on $\R^n \times [0,T]$
for each $T>0$, however, the Lipschitz constant $C$ depends on $T$ in this result.

\begin{prop}\label{prop:Lip-T}
Assume that {\rm (A1)} holds. 
Let $u$ be the solution to {\rm (C)} with a given initial data $u_0 \equiv 0$.
Then, for each $t>0$,
\[
|u(x_1,t) - u(x_2,t)| \leq \left(\|Df\|_{L^\infty(\R^n)} t \right) |x_1-x_2| \quad \text{ for all } x_1, x_2 \in \R^n.
\]
\end{prop}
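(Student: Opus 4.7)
The plan is to exploit translation invariance: since $F(p,X)$ depends only on $p$ and $X$ and not on $x$, shifting a solution in space produces a solution of the same PDE with only the source term translated. Concretely, fix $y \in \R^n$ and define $v(x,t) := u(x+y,t)$. Then $v$ solves $v_t + F(Dv,D^2v) = f(x+y)$ with initial data $v(\cdot,0) \equiv 0$. So $u$ and $v$ satisfy the same equation modulo the perturbation $f(x+y) - f(x)$, whose sup-norm is bounded by $L|y|$ where $L := \|Df\|_{L^\infty(\R^n)}$.

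Next I would build an explicit supersolution for the shifted problem by adding a pure time function. Set $w(x,t) := u(x,t) + L|y|\,t$. Since $F_\ast(0,0)=F^\ast(0,0)=0$ and $F$ is independent of $x$, a direct computation (in the viscosity sense, testing with the same smooth test functions that test $u$) gives
\[
w_t + F(Dw,D^2w) = u_t + F(Du,D^2u) + L|y| = f(x) + L|y| \geq f(x+y).
\]
Thus $w$ is a supersolution to the PDE solved by $v$, and $w(\cdot,0) = 0 = v(\cdot,0)$. By the comparison principle (A1), $v \leq w$ on $\R^n \times [0,\infty)$, i.e.\ $u(x+y,t) \leq u(x,t) + L|y|\,t$. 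The symmetric choice $w(x,t) = u(x,t) - L|y|\,t$ is a subsolution of the same shifted problem, giving the matching lower bound.

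Taking $y = x_2 - x_1$ and evaluating at $x = x_1$ yields
\[
|u(x_2,t) - u(x_1,t)| \leq L|x_2 - x_1|\,t,
\]
which is the claim. The main (mild) obstacle is making the comparison rigorous at the viscosity level, since $f(x+y) - f(x)$ is only bounded by $L|y|$ pointwise rather than identically equal to a constant; however, the additive-in-time perturbation $\pm L|y|\,t$ is smooth and can be absorbed directly into test functions, so the standard proof of the comparison principle in (A1) applies without modification. I do not expect any other difficulty: no regularization of $u$, no doubling of variables beyond what (A1) already provides, and no use of (A2), (A3), or \eqref{f-con} is needed.
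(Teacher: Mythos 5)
Your argument is essentially the same as the paper's: the paper defines $v_\pm(x,t) = u(x+y,t)\pm\|Df\|_{L^\infty}|y|\,t$, checks that they are a supersolution and a subsolution of the original problem (C) with source $f$, and compares against $u$; you do the equivalent rearrangement, comparing $u(x+y,t)$ against $u(x,t)\pm\|Df\|_{L^\infty}|y|\,t$ in the translated problem with source $f(\cdot+y)$. The two are literally the same inequalities, and your step that the additive-in-time term shifts the viscosity inequalities by a constant (using $F_\ast(0,0)=F^\ast(0,0)=0$ and $x$-independence of $F$) is exactly what the paper uses. The one tiny cosmetic advantage of the paper's arrangement is that it invokes (A1) for the equation with source $f$ itself rather than for the shifted source $f(\cdot+y)$, so no remark about (A1) being translation-insensitive is needed; but this is not a substantive difference.
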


\begin{proof}
Fix $y \in \R^n$. Let 
\[
\begin{cases}
v_-(x,t) = u(x+y,t) - \|Df\|_{L^\infty(\R^n)} |y|t \quad &\text{ for } (x,t) \in \R^n \times [0,\infty),\\
v_+(x,t) = u(x+y,t)  + \|Df\|_{L^\infty(\R^n)} |y|t \quad &\text{ for } (x,t) \in \R^n \times [0,\infty).
\end{cases}
\]
It is straightforward to see that $v_-$ and $v_+$ are a subsolution and a supersolution to (C) respectively, and
\[
v_-(x,0)= u_0(x) = v_+(x,0)=0.
\]
Therefore, the comparison principle in (A1) yields  $v_-(x,t) \leq u(x,t) \leq v_+(x,t)$. 
We thus have
\[
|u(x+y,t)-u(x,t)| \leq \|Df\|_{L^\infty(\R^n)} |y|t 
\quad\text{for all} \ x\in\R^n, t\ge0.
\qedhere
\]
\end{proof}

\begin{rem}
It is worth emphasizing that the Lipschitz bound obtained in Proposition \ref{prop:Lip-T} is not enough to obtain the existence of 
the asymptotic speed, and 
assumption (A2) plays an essential role in the proof of Theorem \ref{thm:main} (see the last part of the proof of Theorem \ref{thm:main}).
In fact, (A2) can be replaced by the following weaker assumption.

\begin{itemize}
\item[(A2)'] For $u_0 \equiv 0$, the solution $u$ to (C) is uniformly continuous in the space variable $x$ for each $t\geq 0$, that is, 
there exists a continuous, increasing function $\omega_t:[0,\infty)\to[0,\infty)$ with $\omega_t(0)=0$ such that 
\[
|u(x,t)-u(y,t)|\le\omega_t(|x-y|)\quad\text{for all} \ x,y\in\R^n.
\]
And for each fixed $R>0$,
\[
\lim_{t \to \infty} \frac{\om_t(R)}{t}=0.
\]
\end{itemize}
On the other hand, (A2) is easier to be verified than (A2)'.
We need to check (A2) carefully for each application in the next section.
\end{rem}


\section{Applications} \label{sec:application}
\subsection{First-order Hamilton-Jacobi equations}\label{subsec:HJ}
Assume that \eqref{f-con} holds and $F(p,X)= -H(p)$ where $H:\R^n \to \R$ is a continuous function  satisfying
\begin{equation}\label{HJ-con}
H(0)=0 \quad \text{and} \quad \lim_{|p| \to \infty} H(p)=+\infty.
\end{equation}
It is clear that if \eqref{HJ-con} holds, then we have the validity of (A1)--(A3) and hence also  of Theorem \ref{thm:main}.

A  special case is when $H$ is $1$-homogeneous, that is, $H(p) =  g\left(\frac{p}{|p|}\right)|p|$ for all $p \neq 0$ and $H(0)=0$.
Here, $g: \R^n \to (0,\infty)$ is a given continuous function.
This situation appears if we consider the surface evolution equation $V=g(n(x))$ in the birth and spread type model in 
Section \ref{sec:birth-spread}. 
Notice that $H$ is not necessarily convex. 

\subsection{Forced mean curvature flow}\label{subsec:FMC}
Consider a forced mean curvature flow 
\[
V=\kap+1
\]
in the birth and spread type model in Section \ref{sec:birth-spread}. 
Then, the associated PDE in (C) becomes
\begin{equation}\label{FMCF}
\begin{cases}
u_t -\left(\Div\left(\frac{Du}{|Du|}\right)+1\right)|Du|=f(x) \quad &\text{in} \ \R^n\times(0,\infty),\\
u(x,0)=u_0(x) \quad &\text{on } \R^n.
\end{cases}
\end{equation}
Assume  that \eqref{f-con} holds.
We have that (A1) holds (see \cite{G-book} for instance).
Therefore, we only need to verify (A2) here.

\begin{lem}\label{lem:lip}
Assume  that \eqref{f-con} holds. 
Let $u$ be the solution to \eqref{FMCF} with given initial data $u_0 \equiv 0$.
Then, $u$ is Lipschitz in space, and there exists $C>0$ depending only on $f$ and $n$ such that 
\[
\|Du\|_{L^\infty(\R^n\times[0,\infty))} \leq C. 
\]
\end{lem}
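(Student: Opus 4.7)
The plan is to establish the uniform spatial Lipschitz bound by constructing an explicit ``moving cone'' supersolution to \eqref{FMCF} --- exploiting the unit-speed forcing encoded by $V = \kappa + 1$ --- and then leveraging it to obtain a $t$-independent Lipschitz estimate.

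First I record the basic bounds. Comparison with the trivial sub- and super-solutions $\underline{u} \equiv 0$ and $\overline{U}(x,t) := Mt$ (where $M := \max_{\R^n} f$) yields $0 \le u(x,t) \le Mt$. Moreover, $u$ is nondecreasing in $t$, by comparing $u(\cdot, \cdot + s)$ to $u(\cdot, \cdot)$, both solving \eqref{FMCF}, and noting that $u(\cdot, s) \ge 0 = u(\cdot, 0)$.

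Next, the cone supersolution. Assume $n \ge 2$; for $n = 1$ the equation reduces to the Hamilton-Jacobi case already treated in Subsection \ref{subsec:HJ}. Set $A := M R_0 /(n-1)$ and $\bar{u}(x,t) := A(t - |x|)_+$. Where $0 < |x| < t$, $\bar u$ is smooth and a direct computation gives
\[
\bar{u}_t - \Bigl(\Div\Bigl(\tfrac{D\bar{u}}{|D\bar{u}|}\Bigr) + 1\Bigr)|D\bar{u}| = \tfrac{A(n-1)}{|x|} \ge f(x),
\]
since $A(n-1)/|x| \ge A(n-1)/R_0 = M \ge f$ on $B(0,R_0)$ and $f$ vanishes outside. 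At the apex $x = 0$ no smooth test function can touch $\bar u$ from below (the cone has one-sided slope $-A$ in every direction), and the kink $\{|x| = t\}$ is handled by the standard viscosity interpretation for geometric equations through $F^\ast$. Since $\bar{u}(\cdot, 0) \equiv 0 = u_0$, comparison gives $u(x,t) \le A(t-|x|)_+$, so in particular $\supp u(\cdot, t) \subset \overline{B(0, t)}$.

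To pass from this support estimate to the uniform Lipschitz bound, I would apply an inf-convolution argument: for a constant $L$ to be fixed below, let
\[
v(x,t) := \inf_{z \in \R^n} \bigl[ u(z,t) + L|x - z|\bigr].
\]
Then $v$ is $L$-Lipschitz in $x$, $v \le u$, and $v(\cdot, 0) \equiv 0 = u_0$; the compactness of $\supp u(\cdot, t)$ ensures that the infimum is attained at some $z^\ast(x,t)$ in a bounded set. The key claim is that, for $L$ sufficiently large (depending on $\|Df\|_\infty$, $R_0$, $A$, $n$), $v$ is itself a viscosity supersolution of \eqref{FMCF}. Granting this, the comparison principle in (A1) forces $v \ge u$, hence $v \equiv u$, and the bound $\|Du\|_{L^\infty(\R^n \times [0, \infty))} \le L$ follows.

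The main obstacle is the rigorous supersolution verification for $v$, which is delicate because of the singularity of the curvature operator at $\{Du = 0\}$. The two structural features that drive the argument are the $x$-independence of $F$ (so that translations commute with the equation and the curvature term is preserved by the inf-convolution) and the Lipschitz regularity of $f$ (so that the $f$-error $\|Df\|_\infty |x - z^\ast|$ is absorbed by the penalty $L|x-z|$ once $L \ge \|Df\|_\infty$). A more robust alternative route would be to regularize \eqref{FMCF} by replacing $|p|$ with $(|p|^2 + \delta^2)^{1/2}$ and mollifying the curvature operator, to prove a $\delta$-uniform gradient estimate for the smooth solutions $u_\delta$ by a Bernstein-type argument on $|Du_\delta|^2$ (in which the $+1$ forcing term supplies the coercivity needed to close the maximum-principle inequality), and to pass to the limit $\delta \to 0+$ using the stability of viscosity solutions.
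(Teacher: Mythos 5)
Your primary route (cone supersolution followed by inf-convolution) has both correctable and uncorrectable gaps. The cone $\bar{u}(x,t)=A(t-|x|)_+$ fails to be a supersolution in the region $\{|x|>t\}\cap B(0,R_0)$, which is nonempty for $t<R_0$: there $\bar u\equiv 0$ while $f>0$. You would need a shifted cone such as $A(t+R_0-|x|)_+$, and accordingly your support claim $\supp u(\cdot,t)\subset\overline{B}(0,t)$ is false for small $t$. The assertion that no smooth test function touches $\bar u$ from below at the apex is also wrong: any $\varphi$ with $\varphi\le\bar u$ and equality at $(0,t_0)$ (e.g.\ a downward paraboloid) qualifies, so the viscosity inequality must actually be checked there through $F_*$, and whether it holds depends on the relative size of $A$ and $n-1$. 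Those issues are fixable. What is not, as stated, is the inf-convolution step, which you acknowledge is delicate but leave entirely unargued: the $f$-error at the optimizer $z^*$ is of size $\|Df\|_{L^\infty}|x-z^*|$, and the only a priori control one has is $L|x-z^*|\le v(x,t)\le u(x,t)\le Mt$, i.e.\ $|x-z^*|\le Mt/L$, which grows linearly in $t$. The resulting comparison therefore only recovers a $T$-dependent Lipschitz estimate --- precisely Proposition~\ref{prop:Lip-T}, which as the paper notes is \emph{not} sufficient --- rather than the uniform-in-time bound demanded by the lemma. To arrange a $t$-independent penalty $L$ one would already need a $t$-independent Lipschitz bound on $u$, which is circular.

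The ``more robust alternative route'' you describe in your final sentence --- regularize $|Du|$ by $\sqrt{|Du|^2+\varepsilon^2}$, run a Bernstein argument on $w^\varepsilon=|Du^\varepsilon|^2/2$, use the persistent eikonal term to supply coercivity, and let $\varepsilon\to 0$ --- is exactly the paper's proof; that is the argument to develop, not the inf-convolution. The two ingredients you do not spell out are (i) the matrix Cauchy--Schwarz inequality $(\tr AB)^2\le(\tr ABB)(\tr A)$ for $A\ge 0$, used to pass from the Bernstein quantity $\tr\big(b^\varepsilon D^2u^\varepsilon D^2u^\varepsilon\big)$ to $\big(\tr(b^\varepsilon D^2u^\varepsilon)\big)^2/n$, and (ii) the identification $\tr(b^\varepsilon D^2u^\varepsilon)=u^\varepsilon_t-\sqrt{|Du^\varepsilon|^2+\varepsilon^2}-f$ together with the uniform time-derivative bound $\|u^\varepsilon_t\|_{L^\infty}\le M+1$ (Lemma~\ref{lem:A1-A2} adapted to the regularized equation), which give $\big(\tr(b^\varepsilon D^2u^\varepsilon)\big)^2\ge\tfrac12|Du^\varepsilon|^2-C$. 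Combining these two yields $\tfrac12|Du^\varepsilon|^2-C\le n\,Df\cdot Du^\varepsilon$ at the maximum of $w^\varepsilon$, which closes the estimate uniformly in $\varepsilon$ and in $t$.
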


\begin{proof}
For $\ep\in (0,1)$, we consider the following approximated equation  
\begin{equation}\label{eq:approx}
\begin{cases}
u^{\ep}_t-\left(\Div\left(\frac{Du^{\ep}}{\sqrt{|Du^{\ep}|^2+\ep^2}}\right)+1\right)\sqrt{|Du^{\ep}|^2+\ep^2}-f=0 \quad &\text{in} \ \R^n\times(0,\infty),\\
u^\ep(x,0)=0 \quad &\text{on } \R^n.
\end{cases}
\end{equation}
This has a unique solution $u^\ep\in C^2_c(\R^n\times[0,\infty))$. 
Setting $b^{\ep}(p):=I_n-p\otimes p/(|p|^2+\ep^2)$,  
we rewrite \eqref{eq:approx} as 
\begin{equation} \label{approx-new}
u^\ep_t-b_{ij}^{\ep}(Du^\ep)u^\ep_{x_ix_j}-\sqrt{|Du^{\ep}|^2+\ep^2}-f=0 \quad \text{in} \ \R^n\times(0,\infty).
\end{equation}
Here we use Einstein's convention. 
 
We use the Bernstein method to get the gradient bound for $u^\ep$, hence $u$. 
Let $w^\ep:=|Du^\ep|^2/2$.
Differentiate the above equation with respect to $x_k$ and multiply by 
$u^\ep_{x_k}$ to yield 
\[
w^\ep_t -b_{ij}^{\ep}\left(w^{\ep}_{x_ix_j}-u^\ep_{x_jx_k}u^\ep_{x_ix_k}\right)-Df\cdot Du^\ep
-u^\ep_{x_i x_j} D_p b_{ij}^{\ep}\cdot Dw^\ep+\frac{Du^\ep\cdot Dw^\ep}{\sqrt{|Du^{\ep}|^2+\ep^2}}=0.
\]
Fix $T>0$.
Take $(x_0,t_0)\in \R^n\times (0,T]$ so that $w^\ep(x_0,t_0)=\max_{\R^n\times[0,T]}w^\ep$. 
At this point, we have 
\begin{equation}\label{Bern-1}
b_{ij}^{\ep}u^\ep_{x_jx_k}u^\ep_{x_ix_k}-Df\cdot Du^\ep
\le0. 
\end{equation}
By using a modified Cauchy-Schwarz inequality  (see Remark \ref{rem:CS} below)
\begin{equation}\label{CS-ineq}
 (\tr AB)^2\le \tr(ABB)\tr A\quad \text{for all} \ A, B\in \bS^n,  \  A\ge0,
\end{equation}
we obtain 
\begin{equation}\label{Bern-2}
Df\cdot Du^\ep\ge 
\tr(b^\ep(Du^\ep)D^2u^\ep D^2u^\ep)\ge \frac{\left(\tr(b^\ep(Du^\ep)D^2u^\ep)\right)^2}{\tr(b^\ep(Du^\ep))}
\ge \frac{\left(\tr(b^\ep(Du^\ep)D^2u^\ep)\right)^2}{n}.  
\end{equation}

By repeating the proof of Lemma \ref{lem:A1-A2}, 
we have that $\|u^\ep_t\|_{L^\infty(\R^n \times [0,\infty))}\le M+1$, where $M=\max_{\R^n} f$, for all $\ep \in (0,1)$. 
We use this and \eqref{approx-new} to yield
\begin{equation}\label{Bern-3}
\left(\tr(b^\ep(Du^\ep)D^2u^\ep)\right)^2=
\left(u^\ep_t-\sqrt{|Du^{\ep}|^2+\ep^2}-f\right)^2
\ge \frac{1}{2}|Du^\ep|^2-C,
\end{equation}
where $C=4(2M+1)^2$. 
 
Combining \eqref{Bern-2} and \eqref{Bern-3} together, we obtain 
\[
\frac{1}{2}|Du^\ep|^2-C \leq n Df\cdot Du^\ep \le C |Du^\ep|, 
\] 
which implies that $\|Du^\ep\|_{L^\infty(\R^n \times [0,\infty))} \leq C$ for some $C>0$ depending only on $\|f\|_{L^\infty}$, $\|Df\|_{L^\infty}$, and $n$.
Let $\ep \to 0$ to yield the desired result.
\end{proof}

\begin{rem}\label{rem:CS}
We give a simple proof of \eqref{CS-ineq} here. 
By the Cauchy-Schwarz inequality, we always have 
\[
0\le \left(\tr(ab)\right)^2\le \tr(a^2)\tr(b^2)\quad\text{for all} \ a,b\in \bS^n. 
\] 
For $A, B\in \bS^n$ with $A\ge 0$, set $a:=A^{1/2}$ and $b:=A^{1/2}B$. Then, 
\[
(\tr(AB))^2\le \tr(A)\tr(A^{1/2}BA^{1/2}B)=\tr(A)\tr(ABB). 
\]
\end{rem}

\subsection{Truncated inverse mean curvature flow}\label{subsec:volcano}
Consider a truncated normal velocity
\begin{equation}\label{surface:t-inv}
V=\frac{1}{\chi(\kap)}   
\end{equation}
in the birth and spread type model in Section \ref{sec:birth-spread},  
where we set 
\begin{equation}\label{func:truncate}
\chi(r):=\min\{\max\{r, \lam\}, \Lam\} \quad \text{ for } r \in \R. 
\end{equation}
Here $\lam$ is sufficiently small and $\Lam$ is sufficiently large satisfying $0<\lam<\Lam$ are given constants.
Then, the associated PDE in (C) becomes 
\begin{equation}\label{eq:truncate}
\begin{cases} \displaystyle
u_t-\frac{|Du|}{\chi\left(\frac{-\tr\left(b(Du)D^2u\right)}{|Du|}\right)} =f(x)  \quad &\text{in } \R^n \times (0,\infty),\\
u(x,0)=u_0(x) \quad &\text{on } \R^n,
\end{cases}
\end{equation}
where $b(p)=I_n - p\otimes p/|p|^2$.

\begin{lem}
Assume that \eqref{f-con} holds.
Let $u$ be the solution to \eqref{eq:truncate} with given initial data $u_0 \equiv 0$. 
Then, $u$ is Lipschitz in space, and there exists $C>0$ depending only on $f$ and $\Lam$ such that 
\[
\|Du\|_{L^\infty(\R^n\times[0,\infty))} \leq C.  
\] 
\end{lem}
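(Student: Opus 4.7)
The plan is to adapt the proof of Lemma \ref{lem:lip}, with a crucial simplification: the truncation $\chi\leq\Lambda$ allows the spatial Lipschitz bound on $|Du|$ to be read off algebraically from the time-Lipschitz bound, bypassing the Bernstein-type argument entirely.

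First I would establish the time-Lipschitz bound. The function $F(p,X)=-|p|/\chi(-\tr(b(p)X)/|p|)$ satisfies $F_\ast(0,0)=F^\ast(0,0)=0$, since $\chi\geq\lambda>0$ implies $|F(p,X)|\leq|p|/\lambda\to 0$ as $p\to 0$. Hence the proof of Lemma \ref{lem:A1-A2} applies verbatim and gives $0\leq u(x,t)\leq Mt$ together with $\|u_t\|_{L^\infty(\R^n\times[0,\infty))}\leq M$, where $M=\max_{\R^n}f$. Next I regularize as in Lemma \ref{lem:lip}: let $\chi^\ep$ be a smooth non-decreasing mollification of $\chi$ with $\lambda\leq\chi^\ep\leq\Lambda$, set $b^\ep(p)=I_n-p\otimes p/(|p|^2+\ep^2)$, and consider
\[
u^\ep_t-\frac{\sqrt{|Du^\ep|^2+\ep^2}}{\chi^\ep\!\left(\tfrac{-\tr(b^\ep(Du^\ep)D^2 u^\ep)}{\sqrt{|Du^\ep|^2+\ep^2}}\right)}=f,\qquad u^\ep(\cdot,0)\equiv 0.
\]
As in Lemma \ref{lem:lip} this admits a unique smooth solution $u^\ep$, and the same supersolution-barrier argument yields $\|u^\ep_t\|_{L^\infty}\leq M_\ep$ with $M_\ep=M+O(\ep)$ (the $O(\ep)$ correction arising because $F_1^\ep(0,0)=\ep/\chi^\ep(0)$ is only approximately zero).

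The algebraic key step is then immediate: since $\chi^\ep\leq\Lambda$ and $f\geq 0$, the regularized PDE gives pointwise
\[
\frac{\sqrt{|Du^\ep|^2+\ep^2}}{\Lambda}\leq\frac{\sqrt{|Du^\ep|^2+\ep^2}}{\chi^\ep(\cdot)}=u^\ep_t-f\leq M_\ep,
\]
so $\|Du^\ep\|_{L^\infty}\leq\Lambda M_\ep$. Stability of viscosity solutions then sends $u^\ep\to u$ locally uniformly as $\ep\to 0$, and the bound passes to the limit: $\|Du\|_{L^\infty(\R^n\times[0,\infty))}\leq\Lambda M$, so one may take $C=\Lambda M$, depending only on $\|f\|_{L^\infty}$ and $\Lambda$.

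I expect the most delicate step to be justifying the smoothness of $u^\ep$: because $\chi^\ep$ is flat on $(-\infty,\lambda]$ and $[\Lambda,\infty)$, the regularized equation is not uniformly parabolic and degenerates to a Hamilton--Jacobi equation in the extreme curvature regimes. This may require an auxiliary viscous regularization (adding $\delta\Delta u^\ep$) to obtain classical solutions, followed by a double limit $\delta\to 0$, $\ep\to 0$; the algebraic bound persists under this double limit because it uses only the upper bound $\chi^\ep\leq\Lambda$ and the time-Lipschitz estimate. By contrast, a Bernstein argument would break down precisely in the flat regimes (where $(\chi^\ep)'(r)=0$ annihilates the diffusion coefficient on which Bernstein relies), so the simple algebraic bound here is well-matched to the truncation structure of the equation.
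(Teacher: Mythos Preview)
Your central algebraic observation is exactly the one the paper exploits: since $\chi\le\Lambda$, the equation gives $|Du|/\Lambda\le |Du|/\chi(\cdot)=u_t-f$, and the time–Lipschitz bound $\|u_t\|_{L^\infty}\le M$ from Lemma~\ref{lem:A1-A2} then forces $|Du|\le\Lambda M$. So the key idea is the same. The difference is purely in how the inequality is made rigorous.

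The paper does \emph{not} regularize. It works directly with the viscosity solution via a cone-comparison (doubling-of-variables) argument: one fixes $L>\Lambda(M+\|f\|_{L^\infty})$, considers
\[
\phi(x,y,t)=u(x,t)-u(y,t)-L|x-y|-\delta(|y|^2+1)^{1/2},
\]
and shows that any interior maximum with $x_0\neq y_0$ is impossible. At such a maximum one has a parabolic semijet with spatial slot of modulus $L$, while the time slot is controlled by the time–Lipschitz bound; plugging into the viscosity inequality and using $\chi\le\Lambda$ yields $L/\Lambda\le M+\|f\|_{L^\infty}$, a contradiction. Letting $\delta\to0$ finishes the proof. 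This is a two-line viscosity argument once the algebra is identified.

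Your route via regularization is not wrong in principle, but it is more laborious and the gap you flag is real: the $\chi^\ep$-regularized equation is genuinely degenerate on the flat plateaus of $\chi^\ep$, so the assertion ``as in Lemma~\ref{lem:lip} this admits a unique smooth solution $u^\ep$'' does not follow from that lemma (where the regularized operator is uniformly parabolic). Your proposed double regularization $(+\delta\Delta u)$ would repair this, but it is an avoidable detour. The lesson here is that because the desired bound is \emph{zeroth-order in $D^2u$} (unlike the Bernstein argument in Lemma~\ref{lem:lip}, which needs to differentiate the equation), it can and should be read off at the level of viscosity semijets without ever producing a classical solution.
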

\begin{proof}
Note first that $\|u_t\|_{L^\infty(\R^n \times [0,\infty))} \leq M$. 
Set $L = \Lam (M+\|f\|_{L^\infty(\R^n)})+1$. 
Fix $T>0$.
For each $\del>0$, we consider the following auxiliary function
\[
\phi(x,y,t) = u(x,t) - u(y,t) - L|x-y| - \del (|y|^2+1)^{1/2} \quad \text{ for } (x,y,t) \in \R^n \times \R^n \times [0,T].
\]
Assume that $\phi$ has a max at $(x_0,y_0,t_0) \in \R^n \times \R^n \times [0,T]$ with $x_0\not=y_0$. We claim that $t_0=0$.
Assume otherwise, then there exists $(\al,p,X) \in P^{2,+} u(x_0,t_0)$ such that
\[
\al -\frac{|p|}{\chi\left(\frac{-\tr\left(b(p)X\right)}{|p|}\right)} \leq f(x_0), 
\]
where $P$ denotes the parabolic semi-jets (see \cite{CIL, G-book} for instance).
Hence,
\[
M+\|f\|_{L^\infty(\R^n)} \geq -\al + f(x_0) \geq \frac{|p|}{\chi\left(\frac{-\tr\left(b(p)X\right)}{|p|}\right)} \geq \frac{|p|}{\Lam}=\frac{L}{\Lam},
\]
which contradicts with the choice of $L$. Hence $t_0 =0$ or $x_0=y_0$. 
We let $\del \to 0$ to get the result with $C=L$.
\end{proof}

\begin{rem}
It is worthwhile to emphasize that if we consider \eqref{eq:truncate} in the two dimensional setting ($n=2$) with $u_0 \equiv 0$, and $f(x)=\mathbf{1}_{\ol{B}(0,R_0)}(x)$ for some $R_0>0$, and all $x\in\R^2$, then 
interestingly, the graph of its maximal solution $u(x,t)$ describes pretty well the shape of Mt.\ Fuji, a stratovolcano. 
Note that \eqref{f-con} does not hold here since $f$ is not continuous. 
We provide a heuristic explanation about a volcano formation model, and explain in details the maximal viscosity solution in this setting in Appendix. 
\end{rem}

\section{Some estimates on asymptotic speed} \label{sec:speed}
In this section, we proceed to study further properties of asymptotic speed for solutions of equations in the previous section. 
Let $u$ be the solution to (C).
Assume (A1)--(A3). 
Let $c_f$ be the asymptotic speed given by \eqref{u-conv}. 
By (A1), we always have 
\begin{equation}\label{ineq:comparison}
c_f\le \max_{x\in\R^n}f(x)=:M_f.  
\end{equation}
We now give further characterization results on $c_f$. 

\subsection{Positive normal velocity ($V>0$)} \label{subsec:V-pos}
We first consider two cases in Subsections \ref{subsec:HJ}, \ref{subsec:volcano}, which are
rather simple because of the fact that normal velocities are always positive.
Indeed, we have the following.

\begin{thm}\label{thm:V-pos}
Assume that \eqref{f-con} holds.
Let $F$ be either the operator given in Subsection {\rm\ref{subsec:HJ}} or Subsection {\rm\ref{subsec:volcano}}. 
Then,  $c_f=M_f$.  
\end{thm}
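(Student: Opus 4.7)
The upper bound $c_f \le M_f$ is already recorded in \eqref{ineq:comparison}, so the whole task reduces to producing the reverse inequality $c_f \ge M_f$. Since Theorem \ref{thm:main} guarantees that $c_f$ is independent of the initial datum, I will take $u_0 \equiv 0$. The plan is to introduce the explicit pointwise barrier
\[
v(x,t) := t f(x), \qquad (x,t) \in \R^n \times [0,\infty),
\]
which is $C^1$ and bounded on each slab $\R^n \times [0,T]$ thanks to \eqref{f-con}, and agrees with $u_0$ at $t = 0$. If $v$ can be certified as a viscosity subsolution of (C) for either of the two operators, then the comparison principle in (A1) yields $u \ge v$ on $\R^n \times [0,\infty)$, and evaluating at a maximizer $x_0 \in \supp f \subset \ol{B}(0,R_0)$ of $f$ gives $u(x_0, t) \ge M_f t$, so that Theorem \ref{thm:main} forces $c_f \ge M_f$.

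The decisive step is the subsolution check. With $v_t = f$ and $Dv = tDf$, at any point where $Dv \ne 0$ the computation is by direct substitution. In Subsection \ref{subsec:HJ}, under the ``positive normal velocity'' assumption $H \ge 0$ (automatic in the $1$-homogeneous case $H(p) = g(p/|p|)|p|$ with $g > 0$),
\[
v_t + F(Dv, D^2 v) = f - H(tDf) \le f.
\]
In Subsection \ref{subsec:volcano}, the structure $F(p,X) = -|p|/\chi(\cdot) \le 0$ (since $\chi \ge \lam > 0$) ensures that for any $C^2$ test function $\phi \ge v$ touching at a non-degenerate point (where $D\phi = Dv$),
\[
v_t + F(D\phi, D^2 \phi) \le f.
\]
At a degenerate point with $Dv = 0$, which occurs only at maximizers of $f$ (where $Df = 0$), the operator is read through its semicontinuous envelopes; in both settings one in fact has $F^\ast(0, X) = F_\ast(0, X) = 0$ for every $X$ (by continuity of $H$ in Subsection \ref{subsec:HJ}, and because $|p|/\chi \to 0$ uniformly in $X$ as $|p| \to 0$ in Subsection \ref{subsec:volcano}), so the subsolution inequality collapses there to the trivial $f \le f$.

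With $v$ thereby certified as a viscosity subsolution, comparison from (A1) yields $u \ge v = tf$ on $\R^n \times [0,\infty)$, and evaluating at $x_0$ produces $u(x_0,t)/t \ge M_f$ for every $t > 0$; combining with Theorem \ref{thm:main} and \eqref{ineq:comparison} pins down $c_f = M_f$. The main obstacle I anticipate is precisely the viscosity-sense verification at the degenerate set $\{Dv = 0\}$, and in particular at the peak $x_0$: the $C^1$-but-not-$C^2$ regularity of $v$ inherited from $f \in C^1_c$ forbids plugging $D^2 v$ directly into the second-order operator of Subsection \ref{subsec:volcano}, so one is forced to argue with arbitrary $C^2$ touching functions $\phi \ge v$ and to rely on the sign $F \le 0$ to neutralize the unknown $D^2 \phi$.
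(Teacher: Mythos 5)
Your proof is correct for the operator of Subsection 4.3 and is a genuinely different route from the paper's. The paper's proof factors through Lemma \ref{lem:HJ}: one first solves $w_t-\delta|Dw|=f$ explicitly by an optimal-control (Hopf--Lax) representation and shows that the path spending almost all its time at a maximizer of $f$ forces $w/t\to M_f$; then, using $\lambda\le \chi\le\Lambda$, one observes $u$ is a supersolution of that first-order equation with $\delta=1/\Lambda$, and comparison transfers the lower bound to $u$. You short-circuit all of that by using $v(x,t)=tf(x)$ as an explicit subsolution, and the key computation is simply that the operator is everywhere nonpositive ($F\le 0$, with $F_\ast(0,\cdot)=F^\ast(0,\cdot)=0$), so the unknown second-order part of the test function is harmless. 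That is correct, more elementary, and you also handle the one genuine technical subtlety, namely that $v$ is only $C^1$ and must be tested against $C^2$ functions. Your argument even yields the conclusion under a \emph{weaker} coercivity hypothesis than the paper's: you need $H\ge 0$, while Lemma \ref{lem:HJ} as invoked in the paper implicitly requires a linear lower bound $H(p)\ge\delta|p|$ (as in the $1$-homogeneous case $H(p)=g(p/|p|)|p|$ with $g>0$).

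One caveat worth flagging honestly: the theorem as printed hypothesizes only \eqref{HJ-con}, namely $H(0)=0$ and $H(p)\to+\infty$, and neither your argument nor the paper's actually covers that generality. As you yourself note, your subsolution check $f-H(tDf)\le f$ needs $H\ge 0$. This is not a gap in your argument so much as an unstated restriction that the paper also makes implicitly (the section title ``Positive normal velocity ($V>0$)'' signals it). Indeed, without some such sign condition the statement can fail: for the convex $H(p)=|p|^2-2p$ in one dimension, which satisfies \eqref{HJ-con}, the optimal-control representation of $u_t=H(Du)+f$ has running cost $(\gamma'-2)^2/4$, so standing still at the maximizer of $f$ costs $1$ per unit time and the asymptotic speed is $(M_f-1)_+$, not $M_f$. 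It is worth stating $H\ge 0$ (or the $1$-homogeneous form with $g>0$) explicitly as you have done.

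Two very small points. First, the remark that $Dv=0$ ``occurs only at maximizers of $f$'' is not quite right---it occurs at all critical points of $f$ and on the complement of $\supp f$---but this plays no role since your argument only uses the envelope identity $F_\ast(0,X)=F^\ast(0,X)=0$. Second, after establishing $u\ge tf$ you correctly lean on Theorem \ref{thm:main} to convert the pointwise bound $u(x_0,t)/t\ge M_f$ at a single maximizer $x_0$ into $c_f\ge M_f$; that use of the already-proved locally uniform convergence is the right move and is what lets you avoid invoking (A2) or any continuity estimate here.
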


To prove this theorem, we need the following simple lemma.

\begin{lem}\label{lem:HJ}
Let $f:\R^n \to \R$ be a function satisfying \eqref{f-con}, and $\del>0$ be a given constant.
Let $w$ be the solution to
\begin{equation}\label{eq:w}
\begin{cases}
w_t -\del |Dw| = f(x) \quad &\text{in } \R^n \times (0,\infty),\\
w(x,0)=0 \quad &\text{on } \R^n.
\end{cases}
\end{equation}
Then, 
\[
\lim_{t \to \infty} \frac{w(x,t)}{t} = M_f \quad \text{locally uniformly for } x \in \R^n. 
\]
\end{lem}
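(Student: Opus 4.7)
The plan is to prove the two-sided bound $w(x,t)/t \to M_f$ by constructing matching sub- and super-solutions, exploiting the fact that the equation $w_t - \delta|Dw| = f$ has a purely transport/spreading character with finite speed $\delta$.

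For the upper bound, I would use the spatially constant supersolution $\bar w(x,t) = M_f t$. Since $\bar w_t - \delta|D\bar w| = M_f \geq f(x)$ and $\bar w(\cdot,0) = 0 = w(\cdot,0)$, the comparison principle (valid here since $H(p) = -\delta|p|$ is globally Lipschitz and $f$ is bounded) yields $w(x,t) \leq M_f t$ everywhere, hence $\limsup_{t\to\infty} w(x,t)/t \leq M_f$.

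For the lower bound, fix $\varepsilon \in (0, M_f)$ and choose $x^* \in \R^n$ achieving $f(x^*) = M_f$ (which exists by \eqref{f-con} and continuity of $f$). The candidate subsolution is the cone
\[
\underline w(x,t) := \max\Big\{0, \, (M_f - \varepsilon)\big(t - \delta^{-1}|x - x^*|\big)\Big\}.
\]
On the interior of the cone away from the vertex $x = x^*$, direct computation gives $\underline w_t - \delta|D\underline w| = (M_f-\varepsilon) - (M_f-\varepsilon) = 0 \leq f(x)$. Outside the cone, $\underline w \equiv 0$ is trivially a subsolution since $f \geq 0$. At the lateral surface $\{t = |x-x^*|/\delta\}$ the cone has an inward corner, so no $C^1$ test function can touch from above and the condition is vacuous. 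At the vertex $(x^*, t_0)$ with $t_0 > 0$, any test function $\varphi$ touching $\underline w$ from above must satisfy $\varphi_t(x^*, t_0) = M_f - \varepsilon$ (smoothness of $\underline w$ in $t$ along $x = x^*$) and $|D\varphi(x^*, t_0)| \leq (M_f-\varepsilon)/\delta$ (forced by the downward cone in $x$), so $\varphi_t - \delta|D\varphi| \leq M_f - \varepsilon \leq M_f = f(x^*)$. Since $\underline w(\cdot, 0) = 0 = w(\cdot, 0)$, the comparison principle yields $w(x,t) \geq (M_f-\varepsilon)(t - \delta^{-1}|x-x^*|)$, whence $\liminf_{t\to\infty} w(x,t)/t \geq M_f - \varepsilon$. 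Letting $\varepsilon \to 0^+$ gives the matching lower bound.

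For the local uniformity claim, on any ball $B(0,R)$ the two-sided bound gives
\[
(M_f-\varepsilon)\left(1 - \frac{R + R_0}{\delta t}\right) \leq \frac{w(x,t)}{t} \leq M_f,
\]
which collapses to $M_f$ as $t \to \infty$ uniformly in $x \in B(0,R)$. The main technical obstacle is the subsolution verification at the vertex $x^*$, where one must use the cone geometry to cap $|D\varphi|$ for any touching test function; however, since the cone's apex opens downward at precisely the critical slope $(M_f-\varepsilon)/\delta$, the subsolution inequality reduces to the harmless $M_f - \varepsilon \leq M_f$. An alternative route would be to use the control representation $w(x,t) = \sup\{\int_0^t f(\gamma(s))\,ds : \gamma(0) = x,\ |\dot\gamma|\leq \delta\}$ and evaluate along the trajectory that travels from $x$ to $x^*$ at maximal speed and then stays at $x^*$; this gives the same lower bound without any viscosity verification, but the subsolution argument fits more naturally with the rest of the paper.
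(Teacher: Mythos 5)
Your proof is correct, but it takes a genuinely different route from the paper's. The paper proves the lower bound directly from the Hopf--Lax/optimal-control representation
\[
w(x,t)=\sup\Bigl\{ \textstyle\int_0^t f(\gamma(s))\,ds \,:\, \gamma(0)=x,\ |\gamma'|\le\delta\ \text{a.e.} \Bigr\},
\]
evaluating along the trajectory that runs from $x$ to a maximizer $y$ of $f$ at speed $\delta$ and then stays at $y$; you mention this only as an aside. Instead you build the explicit cone subsolution $\underline w=\max\{0,(M_f-\varepsilon)(t-\delta^{-1}|x-x^*|)\}$ and compare. Your viscosity verification at the two nonsmooth loci is sound: along the lateral surface $\{t=|x-x^*|/\delta\}$, the restriction $t\mapsto\underline w(x_0,t)$ is a hockey stick with a convex kink, so no $C^1$ test function can touch from above; at the vertex $(x^*,t_0)$, $\varphi_t = M_f-\varepsilon$ is forced by differentiability in $t$ along $\{x=x^*\}$, and since $-\delta|D\varphi|\le0$ this alone gives $\varphi_t-\delta|D\varphi|\le M_f-\varepsilon\le f(x^*)$ (the upper bound $|D\varphi|\le(M_f-\varepsilon)/\delta$ you derive from the spatial cone is true but not needed). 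Taking the $\max$ with $0$ is a sensible touch that keeps the subsolution bounded on $\R^n\times[0,T]$ so the comparison principle quoted in (A1) applies. The trade-off is clear: the paper's control-formula argument is shorter and trivializes the lower bound, but requires the Hamiltonian to be convex and first-order; your barrier construction is more robust and would survive in settings where no control formula is available (non-convex $H$, or the degenerate second-order operators appearing later in the paper), at the price of the case-by-case corner verification.
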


\begin{proof}
First of all, it is clear that $\varphi(x,t) =M_ft$ for $(x,t) \in \R^n \times [0,\infty)$ is a supersolution to \eqref{eq:w}.
Therefore, by the usual comparison principle, 
\begin{equation} \label{w-1}
w(x,t) \leq M_ft \quad \text{for all $(x,t) \in \R^n \times [0,\infty)$.}
\end{equation}

Besides, we have the following optimal control formula for $w$
\[
w(x,t)=\sup\left\{ \int_0^t f(\gam(s))\,ds\,:\, \gam \in \AC([0,t],\R^n), \gam(0)=x, |\gam'| \leq \del \text{ a.e. on } [0,t] \right\}.
\]
Here, $\AC([0,t],\R^n)$ is the set of absolutely continuous functions from $[0,t]$ to $\R^n$.
Fix $R>0$ and $x\in B(0,R)$.
Pick $y \in B(0,R_0)$ such that  $f(y)=M_f$. For $t> (R+R_0)/\del$, set
\[
\gam(s)=
\begin{cases}
x+\del s \frac{y-x}{|y-x|} \quad &\text{for } 0 \leq s \leq \frac{|y-x|}{\del},\\
y \quad &\text{for } \frac{|y-x|}{\del} \leq s \leq t.
\end{cases}
\]
Then 
\begin{equation}\label{w-2}
w(x,t) \geq \int_0^t f(\gam(s))\,ds \geq M_f\left(t-  \frac{|y-x|}{\del}\right) \geq M_f \left ( t- \frac{R+R_0}{\del} \right).
\end{equation}
We combine \eqref{w-1} and \eqref{w-2} to reach the conclusion.
\end{proof}

\begin{proof}[Proof of Theorem \ref{thm:V-pos}]
This is a straightforward consequence of Lemma \ref{lem:HJ}.  
We only consider the case in Subsection \ref{subsec:volcano}. 
Let $u$ be the solution to \eqref{eq:truncate}.
Noting that 
$\lam \le g(r)\le \Lam$ for all $r\in\R$, we deduce that $u$ is a supersolution to
\[
u_t-\frac{|Du|}{\Lam}\ge f(x) \quad \text{ in } \R^n \times (0,\infty).
\]
We then use Lemma \ref{lem:HJ} and \eqref{ineq:comparison} to get the conclusion.  
\end{proof}

\subsection{Forced mean curvature flow in the radially symmetric setting} \label{subsec:rad}
In this subsection, we assume $f$ is radially symmetric, that is, 
$f(x) = \tilde f(|x|)$ for $x\in \R^n$, where $\tilde{f}:[0,\infty) \to [0,\infty)$ is given. 
The following theorem gives a complete characterization of $c_f$ in term of $\tilde f$ (or $f$).  
\begin{thm}\label{thm:f-radial}
Assume that $u_0 \in \BUC(\R^n)$ and $f(x) = \tilde f(|x|)$ for $x\in \R^n$,
where $\tilde f\in C_c([0,\infty), [0,\infty)) \cap \Lip([0,\infty), [0,\infty)) $.
Let $u$ be the solution to \eqref{FMCF}. Then
\[
c_f = \max_{r \in [n-1,\infty)} \tilde f(r) = \max_{|x| \geq n-1} f(x).
\]
\end{thm}

In order to prove this theorem, we here consider 
a radially symmetric solution $u(x,t) = \phi(|x|,t)$, 
where $\phi=\phi(r,t):[0,\infty) \times [0,\infty) \to \R$, with 
$u(x,0)=0$ for all $(x,t)\in\R^n\times[0,\infty)$. 
Then, 
\begin{align*}
&u_t=\phi_t, \  
Du=\phi_r \frac{x}{|x|}, \  
D^2u= 
\phi_{rr}\frac{x\otimes x}{|x|^2}
+\phi_r\frac{1}{|x|}\Big(I-\frac{x\otimes x}{|x|^2}\Big). 
\end{align*}
Plugging these into \eqref{FMCF} to reduce it to
\begin{equation}\label{eq:phi}
\begin{cases}
\phi_t - \frac{n-1}{r} \phi_r - |\phi_r| = \tilde f(r) \quad &\text{ in } (0,\infty) \times (0,\infty),\\
\phi(\cdot,0)=0 \quad &\text{ on } [0,\infty).
\end{cases}
\end{equation}
Notice here that since we consider the viscosity solution (which may not be smooth at $x=0$) of \eqref{FMCF}, we do not know the boundary condition of $\phi$ at $r=0$ a priori.

Besides, the Hamiltonian of \eqref{eq:phi} is $H(p,r)=-\frac{n-1}{r} p-|p|-\tilde{f}(r)$ for $(p,r) \in \R \times (0,\infty)$, 
which is concave in $p$ and singular in $r$ at $r=0$.
Its corresponding Lagrangian $L$ is
\[
L(q,r)=
\begin{cases}
\tilde{f}(r) \quad &\text{if} \ \left|q+\frac{n-1}{r}\right| \leq 1, \\
-\infty \quad &\text{otherwise}.
\end{cases}
\]
Let us define the value function $\tilde{\phi}: (0,\infty) \times [0,\infty)$ with a state constraint condition by 
\begin{equation}\label{rep-phi}
\tilde{\phi}(r,t)=\sup \left\{ \int_0^t \tilde{f}(\gam(s))\,ds\,:\,
\gam([0,t]) \subset (0,\infty), \ \gam(t)=r,\ \left|\gam'(s)+\frac{n-1}{\gam(s)}\right| \leq 1 \ \text{a.e.} \right\}.
\end{equation}

\begin{lem}\label{lem:regularity}
Let $\tilde{\phi}(r,t):(0,\infty)\times[0,\infty)$ be the function defined by \eqref{rep-phi}. 
Then, $\tilde{\phi}$ is Lipschitz continuous on $(0,\infty)\times[0,T]$ for any $T>0$, 
and is a viscosity solution to \eqref{eq:phi}. 
\end{lem}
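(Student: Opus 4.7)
The plan is to treat $\tilde\phi$ as the value function of a state-constrained optimal control problem and apply standard dynamic-programming arguments, with care taken near the singularity $r = 0$ of the Hamiltonian. I would first record that admissible trajectories stay bounded away from $0$: writing $\beta(\tau) := \gamma(t - \tau)$ for a trajectory with $\gamma(t) = r$, the velocity constraint reads $\beta'(\tau) \in [(n-1)/\beta(\tau) - 1,\ (n-1)/\beta(\tau) + 1]$, whose lower endpoint is strictly positive whenever $\beta(\tau) < n - 1$. An elementary ODE-comparison argument then yields $\beta(\tau) \geq \min\{r, n-1\}$ on $[0, t]$; in particular the admissible class is non-empty for every $r > 0$, the state constraint $\gamma([0,t]) \subset (0,\infty)$ is never binding, the singular term $(n-1)/\beta$ stays bounded along any candidate trajectory, and $0 \leq \tilde\phi(r, t) \leq \|\tilde f\|_\infty\, t$.

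For the Lipschitz estimates, monotonicity of $t \mapsto \tilde\phi(r, t)$ (extend any admissible trajectory to a longer horizon) together with $\tilde f \leq \|\tilde f\|_\infty$ gives $|\tilde\phi(r, t_2) - \tilde\phi(r, t_1)| \leq \|\tilde f\|_\infty |t_2 - t_1|$. For the space variable I would use a synchronous coupling: given $0 < r_1 < r_2$ and an admissible $\beta_1$ from $r_1$ driven by a control $v(\tau) \in [-1, 1]$ (i.e.\ $\beta_1' = (n-1)/\beta_1 + v$), define $\beta_2$ by the \emph{same} control $v$ starting at $r_2$. The difference satisfies
\[
(\beta_1 - \beta_2)'(\tau) = -\frac{n-1}{\beta_1(\tau)\,\beta_2(\tau)}\bigl(\beta_1(\tau) - \beta_2(\tau)\bigr),
\]
so $|\beta_1(\tau) - \beta_2(\tau)| \leq |r_1 - r_2|$ on $[0, t]$. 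Combined with the Lipschitz bound on $\tilde f$ and passage to the supremum, this yields $|\tilde\phi(r_1, t) - \tilde\phi(r_2, t)| \leq \mathrm{Lip}(\tilde f)\, t\, |r_1 - r_2|$, hence Lipschitz regularity on $(0, \infty) \times [0, T]$.

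To identify $\tilde\phi$ as a viscosity solution of \eqref{eq:phi}, I would first establish the dynamic programming principle
\[
\tilde\phi(r, t) = \sup_\gamma\Bigl\{\tilde\phi(\gamma(s), s) + \int_s^t \tilde f(\gamma(\tau))\, d\tau : \gamma(t) = r\Bigr\}, \quad 0 \leq s \leq t,
\]
by a standard concatenation/restriction argument, and then plug short near-constant admissible trajectories into it at any smooth test point $(r_0, t_0) \in (0, \infty) \times (0, \infty)$. Passing to the limit produces the sub- and supersolution inequalities; the infimum over admissible velocities $v \in [-1 - (n-1)/r_0,\ 1 - (n-1)/r_0]$ equals $-(n-1)\varphi_r/r_0 - |\varphi_r|$ for any test function $\varphi$, which is precisely the Hamiltonian of \eqref{eq:phi}, and the initial condition $\tilde\phi(r, 0) = 0$ is immediate. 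The hardest step is the space Lipschitz estimate, since the admissible-velocity midpoint $-(n-1)/r$ blows up as $r \to 0$ and naive bounds would depend on how close one starts to the singularity; the synchronous coupling circumvents this by exploiting the attractive structure of the drift $(n-1)/\beta$, which forces identical-control trajectories to contract rather than spread and produces a Lipschitz constant uniform on $(0, \infty) \times [0, T]$.
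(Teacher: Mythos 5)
Your proposal is correct, and for the hard part---the space-Lipschitz estimate with a constant uniform over $(0,\infty)$---it takes a genuinely different route from the paper. The paper fixes a near-optimal admissible curve $\gamma$ ending at $r_2$, appends a short ``connector'' segment $\eta$ solving a one-sided ODE ($\eta'+\tfrac{n-1}{\eta}=-1$ when $n-1<r_1<r_2$, respectively $\eta'=-\tfrac{n-1}{\eta}$ when $r_1<r_2\le n-1$) that brings the endpoint from $r_2$ down to $r_1$ in time $\alpha\lesssim r_2-r_1$, and shifts $\gamma$ accordingly; the loss in the cost is controlled by $\|\tilde f\|_\infty\cdot\alpha$, so only boundedness of $\tilde f$ is used, but two case distinctions around the critical radius $n-1$ are needed. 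You instead couple two trajectories $\beta_1,\beta_2$ driven by the \emph{same} control $v$, so that $\beta_1-\beta_2$ solves the linear ODE with strictly negative coefficient $-\tfrac{n-1}{\beta_1\beta_2}$, giving $|\beta_1-\beta_2|\le|r_1-r_2|$ pointwise, and then you invoke the Lipschitz bound on $\tilde f$. This avoids the case split entirely and exploits the contracting (attractive) structure of the drift $(n-1)/\beta$, at the mild cost that your Lipschitz constant $\mathrm{Lip}(\tilde f)\,T$ grows linearly in $T$ (the paper's connector argument gives a $T$-uniform constant via $\|\tilde f\|_\infty$); since the lemma is stated on the strip $(0,\infty)\times[0,T]$ for each fixed $T$, this is still sufficient. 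Your preliminary observation that any admissible backward trajectory satisfies $\beta(\tau)\ge\min\{r,n-1\}$ is exactly what is needed both for nonemptiness of the admissible class and to guarantee that the coupled trajectory $\beta_2$ and its velocity field remain well-defined and away from the singularity at $0$. The DPP/test-function step and the identification of the Hamiltonian $\inf_{|q+(n-1)/r_0|\le1}q\,p=-\tfrac{n-1}{r_0}p-|p|$ match the paper's (brief) treatment; just be a little more careful in the time-regularity step to state explicitly that the upper bound $\tilde\phi(r,t_2)-\tilde\phi(r,t_1)\le\|\tilde f\|_\infty(t_2-t_1)$ comes from restricting and shifting trajectories, not from monotonicity alone.
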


\begin{proof}
Let $0<r_1<r_2$ and $t>0$. 
We first consider the case where $n-1<r_1<r_2$ with $r_2-r_1<t$. 
Take an arbitrary $\gam$ in the admissible class of \eqref{rep-phi} 
such that $\gam(t)=r_2$.
Let $\eta$ be the solution of the following ODE
\[
\begin{cases}
\displaystyle
\eta'(s) +\frac{n-1}{\eta(s)}=-1  \quad \text{ for } s>0,\\
\eta(0) = r_2.
\end{cases}
\]
Since $|\eta'(s)|=1+(n-1)/\eta(s)\ge 1$ as long as $\eta(s)>0$, there exists $\alpha_1>0$ such that 
\[
\eta(\alpha_1)=r_1, \quad
\alpha_1\le r_2-r_1<t. 
\]

Set $\tilde \gam: [0,t] \to (0,\infty)$ such that
\[
\tilde \gam (s)=
\begin{cases}
\displaystyle
\gam(s+\alpha_1) \quad &\text{ for } 0\leq s \leq t-\alpha_1,\\
\eta (s- (t-\alpha_1)) \quad &\text{ for } t -\alpha_1 \leq s \leq t.
\end{cases}
\]
Then $\tilde \gam$ is also in the admissible class of \eqref{rep-phi} with $\tilde \gam(t) = r_1$.
Because of the boundedness of $f$, one has 
\begin{align*}
\tilde{\phi}(r_1,t)\ge&\, 
\int_0^t \tilde f(\tilde{\gam}(s))\,ds
=\int_0^{t-\alpha_1} \tilde f(\gam(s+\alpha_1))\,ds+\int_{t-\alpha_1}^t \tilde f(\tilde{\gam}(s))\,ds\\
\ge&\,  
\int_{\alpha_1}^t \tilde f(\gam(s))\,ds-C\alpha_1
\ge  \int_0^t \tilde f(\gam(s))\,ds-C'\alpha_1. 
\end{align*}
Take the supremum of the above over all admissible curves $\gam$ to yield
\[
\tilde{\phi}(r_2,t) \leq \tilde{\phi}(r_1,t) + C \alpha_1 \le \tilde{\phi}(r_1,t) + C (r_2-r_1).
\]
By a similar argument, we get 
\[
\tilde{\phi}(r_1,t) \leq \tilde{\phi}(r_2,t) + C (r_2-r_1).
\]

We next consider the case where 
$r_1<r_2\le n-1$ with $r_2-r_1\le \beta$, where 
$\beta$ will be fixed later. 
We repeat the above argument with a slight modification. 
Take an arbitrary $\gam$ in the admissible class of \eqref{rep-phi} such that $\gam(t)=r_2$.
Let $\eta$ be the solution of the following ODE
\[
\begin{cases}
\displaystyle
\eta'(s) = -\frac{n-1}{\eta(s)} \quad \text{ for } s>0,\\
\eta(0) = r_2.
\end{cases}
\]
Noting that $|\eta'|=(n-1)/\eta\ge (n-1)/r_2$, we see that there exists 
$\alpha_2>0$ such that $\eta(\alpha) = r_1$ and 
\[
\alpha_2 \leq \frac{r_2 (r_2-r_1)}{n-1}.
\]
Choose $\beta>0$ so small that  $r_2 (r_2-r_1)/(n-1)<t$. 

By a similar argument to the above, we obtain 
\begin{equation}\label{ineq:near0}
|\tilde{\phi}(r_1,t)-\tilde{\phi}(r_2,t)| \leq  C r_2(r_2-r_1). 
\end{equation}

Similarly, we can prove the Lipschitz continuity with respect $t$, and 
we obtain the conclusion. 

By using the dynamic programing principle, we can easily prove that 
$\tilde{\phi}$ is a viscosity solution to \eqref{eq:phi}. 
\end{proof}

In view of Lemma \ref{lem:regularity}, the function $\tilde{\phi}$ can be uniquely 
extended to a continuous function on $(r,t)\in[0,\infty)\times[0,T]$. 
We still denote it by $\tilde{\phi}$. 

\begin{lem} \label{lem:phi-r-0}
Let $\tilde{\phi}$ be the function on $[0,\infty)\times[0,\infty)$ defined in the above. 
Then, $\tilde{\phi}_r(0,t)=0$ for all $t>0$.
\end{lem}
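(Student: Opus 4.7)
The plan is to deduce the claim as a direct consequence of the refined near-origin estimate \eqref{ineq:near0} already obtained inside the proof of Lemma \ref{lem:regularity}. That inequality asserts
\[
|\tilde\phi(r_1,t)-\tilde\phi(r_2,t)|\le C r_2 (r_2-r_1)
\]
for $0<r_1<r_2\le n-1$ with $r_2-r_1\le \beta$, where $\beta>0$ is the threshold fixed there. The crucial observation is that the right-hand side is \emph{quadratic} (rather than linear) in $r_2$ once we set $r_1=0$, the extra factor $r_2$ arising from the bound $\alpha_2\le r_2(r_2-r_1)/(n-1)$ on the travel time of the auxiliary ODE flow $\eta'=-(n-1)/\eta$.

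First, I would invoke the continuity of the extended function $\tilde\phi$ up to $r=0$ (guaranteed by Lemma \ref{lem:regularity}) to pass to the limit $r_1\to 0^+$ in the above inequality, obtaining
\[
|\tilde\phi(0,t)-\tilde\phi(r,t)|\le C r^2 \qquad \text{for all } 0<r\le \beta.
\]
Dividing by $r>0$ then gives
\[
\left|\frac{\tilde\phi(r,t)-\tilde\phi(0,t)}{r-0}\right|\le C r,
\]
and letting $r\to 0^+$ yields $\tilde\phi_r(0,t)=0$ in the one-sided sense at the endpoint $r=0$, which is the relevant notion of derivative there.

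The main obstacle is really absorbed into the proof of Lemma \ref{lem:regularity}: one must build an admissible trajectory respecting the interior constraint $\gamma([0,t])\subset(0,\infty)$ and connecting $r_2$ to $r_1$ in time at most $r_2(r_2-r_1)/(n-1)$. The singular drift $-(n-1)/\eta(s)$ near zero is precisely what forces the travel time to carry that extra factor of $r_2$, and this is the entire source of the quadratic (rather than linear) dependence. Given this near-origin estimate, the present lemma is essentially immediate; I do not anticipate any additional technical difficulty beyond correctly citing \eqref{ineq:near0} and justifying the limit $r_1\to 0^+$ by continuity.
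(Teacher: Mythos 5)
Your proposal is correct and takes exactly the approach the paper intends: the paper's own proof is the single sentence ``The proof of this lemma is a straightforward result of inequality \eqref{ineq:near0},'' and you simply spell out the two omitted steps (passing $r_1\to 0^+$ by continuity of the extension from Lemma \ref{lem:regularity}, then dividing by $r$ and using the resulting quadratic bound $|\tilde\phi(0,t)-\tilde\phi(r,t)|\le Cr^2$).
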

The proof of this lemma is a straightforward result of inequality \eqref{ineq:near0}. 

\begin{lem}
Set $u(x,t):=\tilde{\phi}(|x|,t)$ for all $(x,t) \in \R^n \times [0,\infty)$. Then, $u$ is the viscosity solution to \eqref{FMCF}.  
\end{lem}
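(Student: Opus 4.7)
The plan is to verify that $v(x,t) := \tilde\phi(|x|, t)$ is a viscosity solution of \eqref{FMCF} with $v(\cdot, 0) \equiv 0$; the comparison principle from (A1) then forces $v = u$. I will verify the viscosity subsolution and supersolution properties at an arbitrary point $(x_0, t_0)$, splitting the argument according to whether $x_0 = 0$. Denote the operator in \eqref{FMCF} by $\cF(p, X) := -\tr((I_n - p\otimes p/|p|^2)X) - |p|$ for $p \neq 0$.

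Away from the origin ($x_0 \neq 0$): set $r_0 = |x_0|$, $e = x_0/r_0$, and let $\varphi \in C^2$ be such that $v - \varphi$ has a strict local maximum at $(x_0, t_0)$. The rotational symmetry of $v$ lets me symmetrize $\varphi$ over rotations fixing $e$, reducing to an axisymmetric test for which $D\varphi(x_0, t_0) = a e$ and $D^2\varphi(x_0, t_0) = b\, e \otimes e + c (I_n - e \otimes e)$. The one-dimensional test $\psi(r, t) := \varphi(re, t)$ then touches $\tilde\phi$ from above at $(r_0, t_0)$, so the subsolution property from Lemma \ref{lem:regularity} yields
\[
\varphi_t(x_0, t_0) - \frac{n-1}{r_0} a - |a| \leq \tilde f(r_0).
\]
A tangential comparison along $x = x_0 + s e_2$ with $e_2 \perp e$, using $|x_0 + s e_2| = r_0 + s^2/(2r_0) + O(s^4)$ together with the viscosity superdifferential of $\tilde\phi$ at $r_0$, forces $c \geq a/r_0$; substituting into $\cF(ae, D^2\varphi(x_0, t_0)) = -(n-1)c - |a|$ yields the required subsolution inequality. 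The case $a = 0$ uses the envelope $\cF_\ast$, and the supersolution property is parallel.

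At the origin: Lemma \ref{lem:phi-r-0} gives $\tilde\phi_r(0, t_0) = 0$, so $D\varphi(0, t_0) = 0$ for any admissible test function, and the subsolution condition reduces to $\varphi_t(0, t_0) - \tr(B) + \lambda_{\min}(B) \leq \tilde f(0)$ with $B := D^2\varphi(0, t_0)$. I pick a unit vector $e_\ast$ realizing $\lambda_{\min}(B)$ and set $\psi(r, t) := \varphi(r e_\ast, t)$: a $C^2$ one-dimensional test touching $\tilde\phi$ from above at $(0, t_0)$, with $\psi_r(0, t_0) = 0$ and $\psi_{rr}(0, t_0) = \lambda_{\min}(B)$. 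Applying the subsolution property of $\tilde\phi$ at interior points $r_k \downarrow 0$ (obtained after perturbing $\psi$ to create strict local maxima there) and passing to the limit using the Taylor expansion $\psi_r(r, t_0) = \lambda_{\min}(B)\, r + o(r)$ together with the quadratic control \eqref{ineq:near0}, I obtain $\varphi_t(0, t_0) - (n-1)\lambda_{\min}(B) \leq \tilde f(0)$. Finally, the elementary inequality $-\tr(B) + \lambda_{\min}(B) = -\sum_{i\ge 2}\lambda_i(B) \leq -(n-1)\lambda_{\min}(B)$ closes the subsolution case; the supersolution case is symmetric and uses the direction of $\lambda_{\max}(B)$.

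The main obstacle is the limiting step at $r = 0$: $\tilde\phi$ is only Lipschitz, so I cannot plug $\psi$ directly into the one-dimensional equation at $r=0$, and the singular coefficient $(n-1)/r$ in \eqref{eq:phi} must be handled carefully. The remedy is the perturbation-and-limit strategy sketched above, which exploits $\tilde\phi_r(0, t_0) = 0$ from Lemma \ref{lem:phi-r-0} and the quadratic estimate \eqref{ineq:near0} near the origin to tame the singularity in the limit $r_k \to 0^+$.
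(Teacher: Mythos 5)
Your overall strategy---show $v := \tilde\phi(|\cdot|,\cdot)$ is a viscosity solution of \eqref{FMCF} and conclude by comparison---matches the paper's, and at the origin both you and the paper use a perturbation-and-limit device. But the implementations differ, and your version has a genuine gap away from the origin and an unresolved delicacy at the origin.

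Away from the origin, the paper simply cites \cite[Lemma A.1, Appendix A]{GMT}. You re-derive it, but your tangential comparison along the straight line $x = x_0 + s e_2$ does not yield $c \ge a/r_0$. From touching from above you get
\[
\tilde\phi\left(r_0 + \tfrac{s^2}{2r_0} + O(s^4),\, t_0\right) \;\le\; \tilde\phi(r_0,t_0) + \tfrac{s^2}{2}\,c + O(s^3),
\]
and to extract $c \ge a/r_0$ you would need a \emph{lower} bound on $\tilde\phi(r_0+\delta) - \tilde\phi(r_0)$ of order $a\delta$; but $a$ being in the \emph{super}differential only gives the \emph{upper} bound $\tilde\phi(r_0+\delta) \le \tilde\phi(r_0)+a\delta + o(\delta)$, so the two upper bounds cannot be combined into the inequality you want. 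The correct device is a circular arc $\gamma(s)$ on the sphere $|x|=r_0$ with $\gamma'(0)=e_2$, $\gamma''(0)=-e/r_0$: then $u(\gamma(s),t_0)$ is \emph{constant}, so $\frac{d^2}{ds^2}\varphi(\gamma(s),t_0)\big|_{s=0}\ge 0$ reads exactly $c - a/r_0 \ge 0$.

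At the origin, the paper perturbs the $n$-dimensional test function by $-p_k\cdot x$ and applies the $n$-dimensional viscosity inequality at the perturbed maximum $(x_k,t_k)$ with $x_k\ne 0$. Since the full operator $F$ has no coefficient singular at $x=0$, passing to the limit along a subsequence with $q_k/|q_k|\to e$ gives $\varphi_t - \tr(B) + e^{T}Be \le f(0)$ directly, and $e^{T}Be \ge \lambda_{\min}(B)$ finishes. You instead work with the one-dimensional equation \eqref{eq:phi}, whose coefficient $(n-1)/r$ is genuinely singular, and you propose to tame it via \eqref{ineq:near0}. That estimate gives the \emph{boundedness} $|\partial_r\tilde\phi(r,t)|\le Cr$, hence the singular term stays bounded, but identifying its limit requires controlling $(\psi_r(r_k,t_k)-p_k)/r_k$ precisely; your sketch does not fix the relation between the perturbation size $p_k$ and the resulting maximum location $r_k$ tightly enough to determine this quotient. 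The one-dimensional route is probably salvageable, but it is strictly harder than the paper's $n$-dimensional perturbation, which avoids the $1/r$ singularity entirely---an advantage you yourself anticipate but do not fully exploit.
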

\begin{proof}
It is clear from Lemma \ref{lem:regularity} and \cite[Lemma A.1, Appendix A]{GMT} that 
$u$ is a viscosity solution to
\begin{equation}\label{temp}
\begin{cases}
u_t -\left(\Div\left(\frac{Du}{|Du|}\right)+1\right)|Du|=f(x) \quad &\text{in} \ (\R^n \setminus \{0\}) \times(0,\infty),\\
u(0,t) = \tilde{\phi}(0,t) \quad &\text{in } (0,\infty),\\
u(x,0)=u_0(x) \quad &\text{on } \R^n.
\end{cases}
\end{equation}
We thus only need to check that $u$ is a viscosity solution to \eqref{FMCF} at $x=0$.
Note first that, in light of Lemma \ref{lem:phi-r-0}, $Du(0,t)=0$ for all $t>0$.
Let us only check the viscosity subsolution property at $x=0$ as the supersolution property at $x=0$ follows in a similar manner.

Let $\varphi$ be a  smooth test function such that $u - \varphi$ has a strict maximum at $(0,t_0)$ for some $t_0>0$.
Obviously, $D\varphi(0,t_0)= Du(0,t_0)=0$.
Let $\{p_k\} \subset \R^n$ be a sequence of non-zero vectors such that $|p_k|$ is sufficiently small for all $k\in \N$ and $\lim_{k \to 0} p_k=0$.
For each $k \in \N$, we have that $u(x,t) - \varphi(x,t) - p_k\cdot x$ attains a local maximum at $(x_k, t_k)$ and, by passing a subsequence 
if necessary, $\lim_{k \to \infty} (x_k, t_k) = (0,t_0)$.
Since $p_k \neq 0$, $x_k \neq 0$ for all $k \in \N$. Set $q_k = D\varphi(x_k,t_k) + p_k$ for all $k \in \N$.
By the definition of the viscosity subsolution, we yield
\[
\varphi_t(x_k,t_k) - \tr \left(\left(I_n - \frac{q_k \otimes q_k}{|q_k|^2}\right) D^2 \varphi(x_k,t_k)\right) - |q_k| \leq f(x_k).
\]
Let $k\to \infty$ to get the desired conclusion.
\end{proof}

We are now ready to prove the main result in this subsection, Theorem {\rm\ref{thm:f-radial}}.

\begin{proof}[Proof of Theorem {\rm\ref{thm:f-radial}}]

Take $R \in (0,n-1)$. If $\gam$ is in the admissible class of \eqref{rep-phi} such that $\gam(s) \in (0,R)$, then
\[
\gam'(s) \leq 1 - \frac{n-1}{\gam(s)} \leq 1 - \frac{n-1}{R} = - \frac{n-1-R}{R}=:-d<0.
\]
Hence
\[
\left| \{s\in [0,t]\,:\, \gam(s) \in (0,R)\} \right| \leq \frac{R}{d}.
\]
Here, for a Lebesgue measurable set $A$, $|A|$ denotes its Lebesgue measure. 
In particular, for $t>R/d$, we have that
\begin{equation}\label{phi-1}
\phi(r,t) \leq \frac{R}{d} \max_{r \in [0,\infty)} \tilde f(r) + \left(t-\frac{R}{d}\right) \max_{r \geq R} \tilde f(r).
\end{equation}
Divide both sides of \eqref{phi-1} by $t$ and let $t \to \infty$ to yield
\begin{equation*}
c_f \leq \max_{r \geq R} \tilde f(r).
\end{equation*}
We then let $R \to n-1$ to get
\begin{equation}\label{phi-2}
c_f \leq \max_{r\geq n-1} \tilde f(r).
\end{equation}

We need to show that the reverse inequality of \eqref{phi-2} holds as well. 
Take $\ol{r}_0 \geq n-1$ such that $\tilde f(\ol{r}_0)=\max_{r \geq n-1} \tilde{f}(r)$. 
Fix $r \in (0,\infty)$ and $r_0>\ol{r}_0$, and consider two cases.

\smallskip
{\it Case 1: $r>r_0$.} 
Set $T_1:=\frac{r_0(r-r_0)}{r_0-(n-1)}\in(0,\infty)$. For $t>T_1$, define $\gam:[0,t] \to (0,\infty)$ as 
\[
\gam(s):=
\begin{cases}
r_0 \quad &\text{for} \ 0<s<t-T_1,\\
\displaystyle
r_0+(s-t+T_1)\frac{r_0-(n-1)}{r_0} \quad &\text{for} \ t-T_1<s<t.
\end{cases}
\]
It is clear that $\gam$ is admissible in formula \eqref{rep-phi} and hence
\[
\phi(r,t) \geq \int_0^t \tilde{f}(\gam(s))\,ds \geq
\int_0^{t-T_1} \tilde f(\gam(s))\,ds  =\tilde f(r_0) (t-T_1),
\]
as $\tilde{f}$ is nonnegative, which is sufficient to get the conclusion by letting $r_0 \to \ol{r}_0$.

\smallskip

{\it Case 2: $0 < r \leq r_0$.} 
We first consider the following ODE
\[
\begin{cases}
\displaystyle
\xi'(s)=-1-\frac{n-1}{\xi(s)} \quad &\text{for}\ s>0,\\
\xi(0)=r_0.
\end{cases}
\]
Take $T_2>0$ to be the smallest value such that $\xi(T_2)=r$. It is immediate that $T_2 \leq r_0-r$.
For $t>T_2$, consider  $\gam:[0,t] \to (0,\infty)$ as 
\[
\gam(s):=
\begin{cases}
r_0 \quad &\text{for} \ 0\le s<t-T_2,\\
\xi(s-t+T_2) \quad &\text{for} \ t-T_2<s\le t.
\end{cases}
\]
Again,  it is obvious that $\gam$ is admissible in formula \eqref{rep-phi} and 
\[
\phi(r,t) \geq \int_0^t \tilde f(\gam(s))\,ds \geq \tilde f(r_0) (t-T_2).
\]
The proof is complete by letting $r_0 \to \ol{r}_0$. 
\end{proof}

\subsection{Forced mean curvature flow in non-radially symmetric settings} \label{subsec:non-rad}
In non-radially symmetric settings, the situation seems much more complicated.
 At least at this moment, it is quite hard to obtain detailed qualitative properties of the asymptotic speed. 
We here give some partial results based on the analysis of the radially symmetric setting in Subsection \ref{subsec:rad}. 


The first result concerns a situation where $f$ does not take values near its maximum outside of the critical ball $B(0,n-1)$.
\begin{lem}\label{lem:c-less-M}
Assume that $u_0 \in \BUC(\R^n)$ and $f:\R^n \to \R$ satisfying \eqref{f-con}.  Assume further that there exist $s \in (0,M_f)$ and  $R < n-1$ such that
\[
 \{x\in \R^n\,:\,M_f-s \leq f(x) \leq M_f\} \subset B(0,R).
\]
Let $u$ be the solution to \eqref{FMCF}. 
Then $c_f \leq M-s$.
\end{lem}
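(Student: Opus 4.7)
The plan is to bound $u$ from above by the solution of a radially symmetric forced mean curvature flow whose source dominates $f$ but whose ``plateau'' at level $M_f$ is confined strictly inside the critical ball $B(0,n-1)$; the conclusion will then follow from Theorem~\ref{thm:f-radial} applied to the dominating equation. The hypothesis states precisely that the superlevel set $\{f \ge M_f-s\}$ sits inside $B(0,R)$ with $R<n-1$, and it is this gap between $R$ and the critical radius $n-1$ that supplies the room to carry out the construction.

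To build the dominating source, I would first set $g(r):=\max_{|y|=r}f(y)$ for $r\ge 0$. Since $f\in C^1_c(\R^n)$, $g$ is nonnegative, Lipschitz, supported in $[0,R_0]$, and the hypothesis gives $g(r)\le M_f-s$ for every $r\ge R$. Fix $R_1,R_2$ with $R<R_1<R_2<n-1$ and pick a continuous piecewise-linear profile $\phi:[0,\infty)\to[0,M_f]$ with $\phi\equiv M_f$ on $[0,R_1]$, $\phi$ affinely decreasing to $0$ on $[R_1,R_2]$, and $\phi\equiv 0$ on $[R_2,\infty)$. Define
\[
\tilde f(x):=\max\bigl\{g(|x|),\phi(|x|)\bigr\},\qquad x\in\R^n.
\]
By construction $\tilde f$ is radial, Lipschitz, nonnegative, and compactly supported, with $\tilde f\ge g\ge f$ pointwise on $\R^n$. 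Moreover, for $|x|\ge R_2$ one has $\phi(|x|)=0$, whence $\tilde f(x)=g(|x|)\le M_f-s$; in particular $\tilde f(x)\le M_f-s$ whenever $|x|\ge n-1$.

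Let $\tilde u$ denote the solution of \eqref{FMCF} with source $\tilde f$ and initial datum $u_0$. Since $\tilde f$ is radial, Lipschitz, and compactly supported, Theorem~\ref{thm:f-radial} applies and yields
\[
\lim_{t\to\infty}\frac{\tilde u(x,t)}{t}\;=\;\max_{|y|\ge n-1}\tilde f(y)\;\le\;M_f-s,
\]
locally uniformly in $x\in\R^n$. On the other hand, the comparison principle (A1) for \eqref{FMCF}, applied with the ordered pair of sources $f\le\tilde f$ and identical initial data $u_0$, gives $u\le\tilde u$ on $\R^n\times[0,\infty)$. Dividing by $t$ and passing to the limit, while invoking Theorem~\ref{thm:main} to know that $c_f$ exists, yields $c_f\le M_f-s$, as claimed.

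The substantive content is really the construction of $\tilde f$: one must simultaneously (i) majorize $f$, (ii) preserve enough symmetry and regularity to be admissible in Theorem~\ref{thm:f-radial}, and (iii) descend from the plateau at level $M_f$ down to a value $\le M_f-s$ before reaching radius $n-1$. Step (iii) is precisely where the strict inequality $R<n-1$ in the hypothesis is genuinely used; if $R$ were $\ge n-1$ there would be no room to perform the transition before entering the critical region $\{|x|\ge n-1\}$, and this strategy would break down.
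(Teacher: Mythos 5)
Your proposal is correct and follows the paper's strategy: both dominate $f$ by a radial, Lipschitz majorant that falls below $M_f-s$ before radius $n-1$, then apply Theorem~\ref{thm:f-radial} together with the comparison principle. The paper simply takes $\ol f(x)=\max_{|y|=|x|}f(y)$ as the majorant; your auxiliary piecewise-linear profile $\phi$ is superfluous, since $g(|\cdot|)=\ol f$ already has every property you verify for $\tilde f$ (radial, Lipschitz, compactly supported, $\ge f$, and $\le M_f-s$ outside $B(0,R)\subset B(0,n-1)$).
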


\begin{proof}
Define
\[
\ol{f}(x) = \max_{|y|=|x|} f(y) \quad \text{ for } x\in \R^n.
\]
Then $\ol{f}\geq f$,  $\ol{f}$ is radially symmetric, and $\{\ol{f}\ge M_f-s\}\subset B(0,n-1)$ by assumption. In particular, 
\[
 \max_{|x| \geq n-1} \ol{f}(x)\le M_f-s.
 \]
 Let $v$ be the solution to \eqref{FMCF} with the right hand side $\ol{f}$ in place of $f$.
 Then, by the comparison principle, $0 \leq u \leq v$. This, together with Theorem \ref{thm:f-radial}, implies
 \[
 0 \leq c_f \leq M_f-s.
 \qedhere
 \]
\end{proof}

Next, we consider a setting where $f$ takes its maximum value in the whole critical ball $B(0,n-1)$,
in which case we easily verify that $c_f=M_f$.

\begin{lem}\label{lem:c-M}
Assume that $u_0 \in \BUC(\R^n)$ and $f:\R^n \to \R$ satisfying \eqref{f-con}.  
Assume further that there exists $R \geq n-1$ such that
\[
B(0,R) \subset \{x\in \R^n\,:\,f(x)=M_f\}
\]
Let $u$ be the solution to \eqref{FMCF}. 
Then $c_f=M_f$.
\end{lem}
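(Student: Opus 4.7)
The plan is to couple the elementary upper bound with a radially symmetric lower comparison furnished by Theorem~\ref{thm:f-radial}. From \eqref{ineq:comparison} we have $c_f\le M_f$ for free, so the whole task is to establish the matching lower bound $c_f\ge M_f$.

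For every small $\ep>0$ I would build a Lipschitz, compactly supported, radially symmetric source $\underline f_\ep(x)=\tilde g_\ep(|x|)$ that lies pointwise below $f$ and whose asymptotic speed, computed via Theorem~\ref{thm:f-radial}, is close to $M_f$. A convenient choice is
\[
\tilde g_\ep(r)=
\begin{cases}
M_f-\ep & \text{if } 0\le r\le n-1,\\
(M_f-\ep)\bigl(1-(r-(n-1))/\del\bigr) & \text{if } n-1\le r\le n-1+\del,\\
0 & \text{if } r\ge n-1+\del,
\end{cases}
\]
with $\del>0$ to be selected. Since $f$ is continuous and takes the value $M_f$ on $B(0,R)\supset B(0,n-1)$, by closure $f\equiv M_f$ on $\ol B(0,n-1)$, whence $\tilde g_\ep(|x|)\le M_f=f(x)$ for $|x|\le n-1$. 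Outside $\ol B(0,n-1)$ the Lipschitz estimate gives $f(x)\ge M_f-\|Df\|_{L^\infty}(|x|-(n-1))$, and comparing the two affine profiles on $[n-1,n-1+\del]$ shows that any $\del\le(M_f-\ep)/\|Df\|_{L^\infty}$ forces $\tilde g_\ep(|x|)\le f(x)$ throughout $\R^n$.

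Let $\underline u_\ep$ be the solution of \eqref{FMCF} with source $\underline f_\ep$ and initial datum $u_0$. Since $\underline f_\ep\le f$, the function $\underline u_\ep$ is a viscosity subsolution of \eqref{FMCF} with source $f$, so (A1) yields $\underline u_\ep\le u$ on $\R^n\times[0,\infty)$. Theorem~\ref{thm:f-radial} applied to $\underline u_\ep$ gives $\underline u_\ep(x,t)/t\to \max_{r\ge n-1}\tilde g_\ep(r)=M_f-\ep$ locally uniformly in $x$; combined with the existence of $c_f=\lim_{t\to\infty}u(x,t)/t$ furnished by Theorem~\ref{thm:main}, the inequality $u\ge \underline u_\ep$ produces $c_f\ge M_f-\ep$, and sending $\ep\to 0^+$ closes the argument.

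The main obstacle is the borderline case $R=n-1$. There one cannot keep the plateau $\tilde g\equiv M_f$ on $[0,n-1]$ and simultaneously enforce $\tilde g(|\cdot|)\le f$ just outside $\ol B(0,n-1)$, where $f$ is only prevented from dropping faster than $\|Df\|_{L^\infty}$; this forces the $\ep$-loss in the plateau height, recovered only in the final limit. If one knew $R>n-1$ strictly, the construction could be carried out without any $\ep$-loss (set $\tilde g\equiv M_f$ on $[0,(R+n-1)/2]$ and descend linearly to $0$ by $r=R$), so the essential subtlety genuinely lives at the critical radius $n-1$.
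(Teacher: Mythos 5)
Your proof is correct, but it takes a noticeably more elaborate route than the paper's, and the "main obstacle" you flag at the end is not really an obstacle at all. The paper simply sets $\underline f(x)=\min_{|y|=|x|}f(y)$; this is automatically radially symmetric, Lipschitz with the same constant as $f$, compactly supported, pointwise below $f$, and — because $\ol B(0,R)\subset\{f=M_f\}$ with $R\ge n-1$ — equal to $M_f$ on $\ol B(0,R)$, hence $\max_{|x|\ge n-1}\underline f(x)=M_f$ on the nose. Theorem~\ref{thm:f-radial} and comparison then give $M_f=c_{\underline f}\le c_f$ in one line, no $\ep$-regularization and no limiting argument. Your explicit piecewise-linear source accomplishes the same thing, but the $\ep$-loss and the accompanying discussion of the critical radius $R=n-1$ are artifacts of your particular ansatz: a plateau at height $M_f$ (rather than $M_f-\ep$) on $[0,n-1]$ followed by a linear descent at rate $\|Df\|_{L^\infty}$ is also a valid radial Lipschitz minorant of $f$ (because $f\ge M_f-\|Df\|_{L^\infty}(|x|-(n-1))$ and $f\ge 0$), and it already yields $c_f\ge M_f$ without any loss. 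So the critical radius $n-1$ does not in fact create any genuine subtlety for this lemma. One other tiny point: your $\delta$-condition $\delta\le(M_f-\ep)/\|Df\|_{L^\infty}$ presupposes $\|Df\|_{L^\infty}>0$; the case $\|Df\|_{L^\infty}=0$ forces $f\equiv 0$, $M_f=0$, $c_f=0$, and should be disposed of separately.
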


\begin{proof}
The proof goes in a similar manner to that of Lemma \ref{lem:c-less-M}.
Define
\[
\ul{f}(x) = \min_{|y|=|x|} f(y) \quad \text{ for } x\in \R^n.
\]
Then, $\ul{f} \leq f$, $\ul{f}$ is radially symmetric, $\ul{f}=M_f$ on $\ol{B}(0,R)$. In particular,
\[
\max_{|x| \geq n-1} \ul{f}(x)=M_f.
\]
Let $w$ be the solution to \eqref{FMCF} with the right hand side $\ul{f}$ in place of $f$.
 Then, by the comparison principle, $0 \leq w \leq u$. This, together with Theorem \ref{thm:f-radial}, implies
 \[
M_f= c_{\ul{f}} \leq c_f \leq M_f.
 \qedhere
 \]
\end{proof}

\begin{rem}\label{rem:large-time}
Under the setting of Lemma \ref{lem:c-M}, $c_f=M_f$.
It is important going further to study finer asymptotics of $u(x,t)$ as $t \to \infty$ (more or less next terms in the asymptotic expansion of $u$).
A natural question to ask here is 
\[
\lim_{t \to \infty} \left( u(x,t) - c_f t \right) = ?
\]
In general, this is an open problem as we are in the setting that 
$F$ is fully nonlinear, and degenerate elliptic (thus no strong maximum principle)
and $F$ is not convex in $p$. See discussions in \cite[Section 5.7]{LMT}.
We will address this question in the near future.
\end{rem}

\begin{rem}
The asymptotic limit defined in Remark \ref{rem:large-time} is sometimes 
called (unrescaled) asymptotic profile. In the case of forced mean 
curvature flow, it is known in \cite[Theorem 1.4]{Giga-ICM} that a rescaled 
asymptotic profile is of the form 
\[
\lim_{\lambda \rightarrow \infty} u(\lambda x,\lambda t)/\lambda = 
c_f(t-|x|)_{+}
\]
if  $u_0=0$; as far as $u_0$ is compactly supported it can be easily 
generalized for the case $u_0$ is not identically equal to zero. 
In \cite{Giga-ICM} this rescaled limit is also established when the spreading law 
is anisotropic. 

Note that in \cite{GMT} and also in \cite{Giga-ICM}, it is shown that 
the asymptotic speed $c_f$ can be strictly smaller than $M_f$.  
In \cite{GMT} the case when $f$ is a characteristic function is discussed in detail. 
By the way, it is shown that the maximum points of $u(\cdot,t)$ is contained in 
the convex hull of the set of maximum points of $f$ in \cite[Lemma 3.7]{Giga-ICM},  
which is stronger than (A3). 
\end{rem}

Finally, we recall some front propagation problems with obstacles developed in \cite{GMT}.
For an open set $A \subset \R^n$ (resp., a closed set $B\subset\R^n$), 
we denote by $\cF^-[A](t)$ (resp., $\cF^+[B](t)$) 
the level set solution of the following front propagation with obstacles
\begin{align*}
&V=\kappa+1\quad \textrm{ with obstacle }  A, \textrm{ i.e., }  \cF^-[A](t)\subset A\\
&{\rm(resp.,}\ V=\kappa+1\quad \textrm{ with obstacle }  B, \textrm{ i.e., }  \ B  \subset \cF^+[B](t){\rm)},
\end{align*} 
for any $t\ge0$, and $\cF^-[A](0)=A$, $\cF^+[B](0)=B$. 

\begin{lem}\label{lem:obstacle}
Assume that $u_0 \in \BUC(\R^n)$ and $f:\R^n \to \R$ satisfying \eqref{f-con}.  
Let $u$ be the solution to \eqref{FMCF}. We have the following conclusions.
\begin{itemize}
\item[{\rm(i)}]  
If there exists $t_0>0$ such that
\[
\cF^{-}[\{f>0\}](t_0)=\emptyset,
\]
then $c_f <M_f$.
\item[{\rm(ii)}]  
If there exists $s\in (0,M_f)$ such that
\[
\cF^{+}[\{f \geq s\}](t)\to\R^n \quad\text{as} \ t\to\infty,
\]
then $c_f >0$.
\end{itemize}
\end{lem}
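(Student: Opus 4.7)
The plan is to trap the maximal level sets of the forced MCF solution $u$ inside the obstacle MCF flows $\cF^\pm$, and to transfer that geometric information into bounds on $c_f$ through the subadditive-maximum characterization obtained in the proof of Theorem~\ref{thm:main}. Throughout I take $u_0\equiv 0$, which is harmless since $c_f$ is independent of $u_0$ by Theorem~\ref{thm:main}.

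For part (i), set $v(x,t)=u(x,t)-M_ft\le 0$ and let $E(t)=\{v(\cdot,t)=0\}=\{u(\cdot,t)=M_ft\}$. Testing the viscosity subsolution inequality at a point of $E(t)$ with the smooth supertangent $\psi(x,t)=M_ft$, and using $F_\ast(0,0)=0$, yields $M_f\le f(x)$; hence
\[
E(t)\subset\{f=M_f\}\subset\{f>0\} \quad\text{for every } t>0.
\]
The crucial geometric step is the level-set comparison
\[
E(t)\subseteq\cF^-[\{f>0\}](t)\quad\text{for all } t>0,
\]
which I would prove as follows: inside $\{f=M_f\}$ the equation for $v$ degenerates to $v_t=(\kappa+1)|Dv|$, so the zero set of $v$ propagates by $V=\kappa+1$ while remaining inside $\{f>0\}$, matching the definition of $\cF^-$. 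Once this inclusion is granted, the hypothesis forces $E(t_0)=\emptyset$, so $u(x,t_0)<M_ft_0$ for every $x\in\R^n$. By Lemma~\ref{lem:f-A3}, the space--time maximum of $u$ on $\R^n\times[0,t_0]$ is attained at some $(x^\ast,s^\ast)\in\overline{B(0,R_0)}\times[0,t_0]$; if $s^\ast<t_0$ then this maximum is $\le M_fs^\ast<M_ft_0$, while if $s^\ast=t_0$ then it is $<M_ft_0$ by emptiness of $E(t_0)$. In either case $m(t_0):=\sup u(\cdot,t_0)<M_ft_0$, and the identity $c_f=\inf_{t>0}m(t)/t$ from the proof of Theorem~\ref{thm:main} gives $c_f<M_f$.

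For part (ii), I reduce symmetrically to a characteristic source from below. Since $f\ge s\mathbf{1}_{\{f\ge s\}}=:\underline f$, a monotone smooth approximation $\underline f^\ep\uparrow\underline f$ with supports inside $B(0,R_0)$ preserves (A1)--(A3), and the comparison principle yields $u\ge\underline u$, hence $c_f\ge c_{\underline f}$. I then build a subsolution from the expanding obstacle flow $\cF^+[\{f\ge s\}]$. Setting $T^\ast(x)=\inf\{t\ge 0:x\in\cF^+[\{f\ge s\}](t)\}$, which is finite on $\R^n$ by hypothesis, a natural candidate is $\phi(x,t)=\alpha\,(t-T^\ast(x))_+$ for some $\alpha\in(0,s)$: its superlevel sets propagate as $\cF^+[\{f\ge s\}](\cdot)$, and the parameter $\alpha$ should be tunable so that $\phi$ becomes a viscosity subsolution of \eqref{FMCF} with source $\underline f$ (the source $\underline f$ dominating the curvature contribution on $\{f\ge s\}$ where $\phi>0$). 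Comparison then gives $\phi\le\underline u$, and since $\phi(x,t)/t\to\alpha$ for each fixed $x$, we conclude $c_f\ge c_{\underline f}\ge\alpha>0$.

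The main technical obstacle in both parts is the precise level-set identification of the critical sets $E(t)$ in (i) (respectively the superlevel sets of $\phi$ in (ii)) with the obstacle MCF flows $\cF^\pm$, which amounts to a viscosity comparison for a degenerate singular parabolic equation at points where $Du=0$. This is exactly the mechanism developed for characteristic sources in \cite{GMT} and would be adapted here; the accompanying smooth approximation of the discontinuous characteristic sources is routine provided their supports are kept uniformly bounded, so that (A3) is inherited and Theorem~\ref{thm:main} applies throughout the approximation.
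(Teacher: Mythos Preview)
The paper does not supply an argument here: its entire proof is the single sentence ``This is a straightforward result of Theorem~\ref{thm:main} together with \cite[Theorems~5.4, 5.6]{GMT}.'' So you are not being compared against a detailed proof but against a bare citation to the characteristic-source results of \cite{GMT}. Your sketch goes further in that it tries to outline the mechanism behind those results---trapping the extremal level sets of $u$ by the obstacle flows $\cF^\pm$ and then feeding $m(t_0)<M_ft_0$ (resp.\ a linear-in-$t$ lower barrier) into $c_f=\inf_{t>0}m(t)/t$ from the proof of Theorem~\ref{thm:main}. That is indeed the picture, and since you explicitly defer the level-set/obstacle-flow identification to \cite{GMT}, your proposal and the paper's proof ultimately coincide: both hand the substantive step to \cite{GMT}.

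One caution on the heuristic you offer for part~(i). The sentence ``inside $\{f=M_f\}$ the equation for $v$ degenerates to $v_t=(\kappa+1)|Dv|$, so the zero set of $v$ propagates by $V=\kappa+1$'' does not justify $E(t)\subseteq\cF^-[\{f>0\}](t)$, even informally. Globally $v=u-M_ft$ is only a \emph{sub}solution of the homogeneous level-set equation (since $f-M_f\le 0$), and comparison with the solution of that equation starting from $v(\cdot,0)\equiv 0$ gives nothing; moreover $E(0)=\R^n\not\subset\{f>0\}$, so any comparison with the obstacle flow must be initialized at a positive time. The actual route, as in \cite{GMT}, is to pass to the characteristic majorant $\bar f=M_f\mathbf{1}_{\{f>0\}}$ (as you do symmetrically in~(ii) with $\underline f=s\,\mathbf{1}_{\{f\ge s\}}$) and invoke the obstacle-problem comparison proved there, rather than to read it off from the degeneration of the equation on $\{f=M_f\}$. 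Your closing paragraph already concedes this, so the gap is in the heuristic, not in the logical structure of the proposal.
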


This is a straightforward result of Theorem \ref{thm:main} together with 
\cite[Theorems 5.4, 5.6]{GMT}.


\section{Further on asymptotic speed and numerical results}\label{sec:numerical}

In this section, we give numerical schemes for the birth and spread type PDEs, \eqref{FMCF} and \eqref{eq:truncate}, and provide numerical results on asymptotic speed of \eqref{FMCF}  in two dimensions ($n=2$). 
With the aid of numerical simulation, we raise several concrete questions to be studied in the future.

\subsection{Numerical Schemes}

We discretize \eqref{FMCF} and \eqref{eq:truncate} 
by the usual finite difference schemes.
We now recall the discretization
of the curvature term as in \cite{OTG}
with some remarks for equations with outer force term.
See also \cite{OF}.

We discretize the spatial derivative terms 
of the equations 
on the Cartesian grid
\[
 D = \{ x_{i,j} = (i \Delta x, j  \Delta x) \,:\, \ -N \le i,j \le N \}
\]
of a square domain 
$\Omega = [-R,R]^2 \subset \mathbb{R}^2$ 
with a uniform grid spacing $\Delta x > 0$ and number of
points $N \in \mathbb{N}$.
We first omit the time variable $t$
for simplicity to obtain the discretization
of the curvature and eikonal term.
For a given function $u:  \Omega \to \R$, let us set 
$u_{i,j} = u (i \Delta x, j \Delta x)$.
To avoid the division by zero on the curvature term,
we introduce a regularized curvature term
\[
 \tilde{\kappa} = \mathrm{div}\left(\frac{D u}{\sqrt{\varepsilon^2 + |D u|^2}}\right).
\]
Then, the forced mean curvature operator with source term in \eqref{FMCF} is approximated as
\begin{equation}
 \label{spatial diff of FMCF}
  \Phi (u)_{i,j}
  := |\hat{D} u_{i,j}| \tilde{\kappa}_{i,j} + |\tilde{D} u_{i,j}| + f(x_{i,j}). 
\end{equation}
The curvature term $\tilde{\kappa}_{i,j}$ is discretized as follows
\begin{align*}
 & \tilde{\kappa}_{i,j} = \frac{P^+_{i,j} - P^-_{i,j}}{\Delta x} 
 + \frac{Q^+_{i,j} - Q^-_{i,j}}{\Delta x}, \\
 & P^\pm_{i,j} = \frac{\partial^\pm_{x_1} u_{i,j}}{\sqrt{\varepsilon^2 + (\partial^\pm_{x_1} u_{i,j})^2
 + (\bar{\partial}^\pm_{x_2} u_{i,j})^2}}, \quad 
 Q^\pm_{i,j} = \frac{\partial^\pm_{x_2} u_{i,j}}{\sqrt{\varepsilon^2
 + (\bar{\partial}^\pm_{x_1} u_{i,j})^2 + (\partial^\pm_{x_2} u_{i,j})^2}},
\end{align*}
where the partial differences $\partial^\pm_{x_1} u_{i,j}$ and
$\bar{\partial}^\pm_{x_1} u_{i,j}$ on $x_1$ are given by
\begin{align*}
 \partial^\pm_{x_1} u_{i,j} 
 = \pm \frac{u_{i \pm 1,j} - u_{i,j}}{\Delta x}, \quad
 \bar{\partial}^\pm_{x_1} u_{i,j} = \frac{1}{2\Delta x}
 \left( \frac{u_{i + 1, j \pm 1} + u_{i+1,j}}{2} - \frac{u_{i - 1, j \pm 1} + u_{i-1,j}}{2} \right).
\end{align*}
The terms $\partial^\pm_{x_2} u_{i,j}$ and $\bar{\partial}^\pm_{x_2} u_{i,j}$
are also defined analogously as above.
The term $|\hat{D} u_{i,j}|$ in front of $\tilde{\kappa}$ is discretized by
\begin{align*}
 |\hat{D} u_{i,j}| & = \sqrt{|\hat{\partial}_{x_1} u_{i,j}|^2 + |\hat{\partial}_{x_2} u_{i,j}|^2}, \\
 |\hat{\partial}_{x_1} u_{i,j}| & =
 \left\{
 \begin{array}{ll}
  {\displaystyle \left| \frac{u_{i+1,j} - u_{i-1,j}}{2} \right|} &
   {\displaystyle \mbox{if} \ \left| \frac{u_{i+1,j} - u_{i-1,j}}{2} \right| \ge \rho,} \\
  \max \{ |\partial^+_{x_1} u_{i,j}|, | \partial^-_{x_1} u_{i,j}| \}
   & \mbox{otherwise}
 \end{array}
 \right.
\end{align*}
with a small constant $\rho > 0$.
The term $|\hat{\partial}_{x_2} u_{i,j}|$ is defined as the same manner of
$|\hat{\partial}_{x_1} u_{i,j}|$.
We remark that $\rho > 0$ should be chosen small, but not too small.
In fact, for example, when $u(x) = - |x|^2/2$ then it is well known that
$|D u|\mathrm{div} (D u/|D u|) = - 1$ in viscosity sense.
However, if $|D u|$ was approximated just with the center differences
$(u_{i+1,j} - u_{i-1,j})/(2 \Delta x)$ 
and $(u_{i,j+1} - u_{i,j-1})/(2 \Delta x)$, then
the numerical result of $|D u_{0,0}| \tilde{\kappa}_{0,0}$ would be zero.
This discrepancy would cause some irregular numerical results,
in particular, when $f \not\equiv 0$.
We choose adequate $\rho > 0$
to avoid such irregular numerical results.

On the other hand, the first order term
$|\tilde{D} u_{i,j}| = \sqrt{|\tilde{\partial}_{x_1} u_{i,j}|^2 + |\tilde{\partial}_{x_2} u_{i,j}|^2}$ is
discretized with an upwind differencing 
\begin{align*}
 |\tilde{\partial}_{x_1} u_{i,j}| = \max \{ (\tilde{\partial}^+_{x_1} u_{i,j})_+, (-\tilde{\partial}^-_{x_1} u_{i,j})_+ \},
\end{align*}
where $(a)_+ = \max \{ a, 0 \}$ for $a \in \mathbb{R}$,
\begin{align*}
 \tilde{\partial}^\pm_{x_1} u_{i,j} & = \partial^\pm_{x_1} u_{i,j}
 \mp \frac{\Delta x}{2}
 \mu \left( \frac{u_{i \pm 2, j} - 2 u_{i \pm 1, j} + u_{i,j}}{{\color{red} \Delta x}^2},
 \frac{u_{i+1,j} - 2u_{i,j} + u_{i-1,j}}{ \Delta x^2} \right), \\
 \mu (p,q) & =
 \left\{
 \begin{array}{ll}
  p & \mbox{if} \ |p| < q, \\
  q & \mbox{otherwise}.
 \end{array}
 \right.
\end{align*}
The term $|\tilde{\partial}_{x_2} u_{i,j}|$ is also defined
with the same manner as $\tilde{\partial}_{x_1} u_{i,j}$.

\smallskip

We now let $u:\Om \times [0,\infty) \to \R$ be the unknown.
We calculate the approximate force mean curvature flow
\[
u_t =   \Phi (u)_{i,j} = |\hat{D} u_{i,j}| \tilde{\kappa}_{i,j} + |\tilde{D} u_{i,j}| + f(x_{i,j})
\]
with an explicit finite difference scheme,
i.e.,
\begin{equation}
 \label{diff FMCF}
 u^{k}_{i,j} = u^{k-1}_{i,j} + \Delta t \Phi(u)^{k-1}_{i,j}, 
\end{equation}
where $u^k_{i,j} = u(i \Delta x, j \Delta x, k \Delta t)$ with a time span $\Delta t > 0$ for $k\in \N$.

\smallskip

Next, we consider a discretized equation of a truncated inverse mean curvature flow equation with a source \eqref{eq:truncate}. 
Notice that \eqref{surface:t-inv} with \eqref{func:truncate}
has a direction of the flow
such that $V > 0$. 
Hence, $V = u_t / |Du|$ for \eqref{surface:t-inv}
should be discretized with an upwind
differencing like as the eikonal equation.
In this paper we choose the following scheme for \eqref{eq:truncate}
\begin{equation}
 \label{discretized tr-eq}
 u^{k}_{i,j}
 = u^{k-1}_{i,j} +
 \Delta t \left( \frac{|\bar{D} u^{k-1}_{i,j}|}{\chi(- \tilde{\kappa}^{k-1}_{i,j})}
 + f_{i,j} \right),
\end{equation}
where $\chi$ is the function defined by \eqref{func:truncate}. 
Note that the coefficient $|\bar{D} u_{i,j}^{k-1}|$
in \eqref{discretized tr-eq} is calculated with
\begin{align*}
 |\bar{D} u^{k-1}_{i,j}|
 & = \sqrt{|\bar{\partial}_{x_1} u^{k-1}_{i,j}|^2 
 + |\bar{\partial}_{x_2} u^{k-1}_{i,j}|^2}, \\
 | \bar{\partial}_{x_1} u^{k-1}_{i,j} | 
 & = \max \{ (\partial_{x_1}^+ u^{k-1}_{i,j})_+, 
 - (\partial_{x_1}^- u^{k-1}_{i,j})_- \},
\end{align*}
and $|\bar{\partial}_{x_2} u^{k-1}_{i,j}|$
is calculated with the same manner as that of
$|\bar{\partial}_{x_1} u^{k-1}_{i,j}|$.
Although the approximation order of $|\bar{D} u|$
is lower than that of $|\tilde{D} u|$,
$|\bar{D} u|$ is more accurate 
than $|\tilde{D} u|$ for \eqref{discretized tr-eq}
because of the direction of the flow.

Note that the above discretization implies 
the data outside of the domain, that is, 
$u^k_{\pm (N+1), j}$ or $u^k_{i, \pm (N+1)}$
for $-(N+1) \le i,j \le N+1$.
These data should be given by a boundary condition.
Although such a situation is different from 
the Cauchy problem we considered in the previous sections,
we impose the Neumann boundary condition
$\vec{\nu} \cdot D u = 0$,
i.e., 
\begin{align*}
 & u_{\pm (N+1), j} = u_{\pm N, j}, \quad
 u_{i, \pm (N+1)} = u_{i, \pm N} \quad \mbox{for} \ -(N+1) \le i,j \le N+1
\end{align*}
for the numerical simulations in this paper.


\subsection{Propagation by forced mean curvature and asymptotic speed}

In this subsection, we present some numerical results
of asymptotic speed $c = \lim_{t \to \infty} u(x,t)/t$
on \eqref{FMCF} 
by \eqref{diff FMCF} with \eqref{spatial diff of FMCF}.
We already have some mathematical results on the asymptotic speed
for several concrete examples of $f(x)$.
We now verify them, and give some numerical 
predictions on the asymptotic speed for other cases.

Throughout this subsection, we set
$R = 2.56$ and $N=128$, then $\Delta x = 0.02$
for parameters of the spatial discretization.
The time span for \eqref{diff FMCF}
is chosen as $\Delta t = 0.2 \times \Delta x^2$.
For the solution $u(x,t)$ to \eqref{FMCF}, 
the numerical data of the growth speed of $u$ 
at time $t$
is calculated by
the mean value of $u(x,t)/t$ on $\Om$, that is, 
\begin{equation}
 \label{num asym speed}
  c_\triangle (t)
  := \frac{1}{|\Omega|} \int_{\Omega} \frac{u(x,t)}{t} \,dx
  = \frac{1}{(2R)^2} \int_{[-R,R]^2} \frac{u(x,t)}{t} \,dx, 
\end{equation}
where $|\Omega|$ is the Lebesgue measure of $\Omega$.
Then, we see the numerical results of the asymptotic speed
$\lim_{t \to \infty} u(x,t)/t$ by
$c_\triangle (T)$ with $T>0$ chosen large enough.
In this section we choose $T = 40$
so that we calculate \eqref{diff FMCF}
for $1 \le k \le K = 5 \times 10^5$.
The parameters $\varepsilon$ and $\rho$ are chosen as
$\varepsilon = 0.001$, and $\rho = 0.01$, respectively.

\par
\bigskip
\noindent
\textbf{Example 1. A downward cone source (benchmark test).}
We first consider the case that
\begin{equation}
 \label{num ex1}
 f(x) = (r - |x|)_+  \quad \text{ for } x\in \R^2,
\end{equation}
with $r \ge 0$ given,
where $|x|$ is the usual norm of $x \in \mathbb{R}^2$.
In this case, we have 
\begin{equation}
 \label{asym spd ex1}
 c = c(r) = (r-1)_+. 
\end{equation}
due to Theorem \ref{thm:f-radial}.
From the above result, 
the numerical data also depends on $r$,
i.e., $c_\triangle = c_\triangle (t;r)$. 
The left figure of Figure \ref{force1: r-rate}
presents a graph of $r \mapsto c_\triangle (40;r)$ 
and $r \mapsto c(r)$ (dashed line) for this situation.
\begin{figure}[htbp]
 \begin{center}
  \includegraphics[scale=0.8]{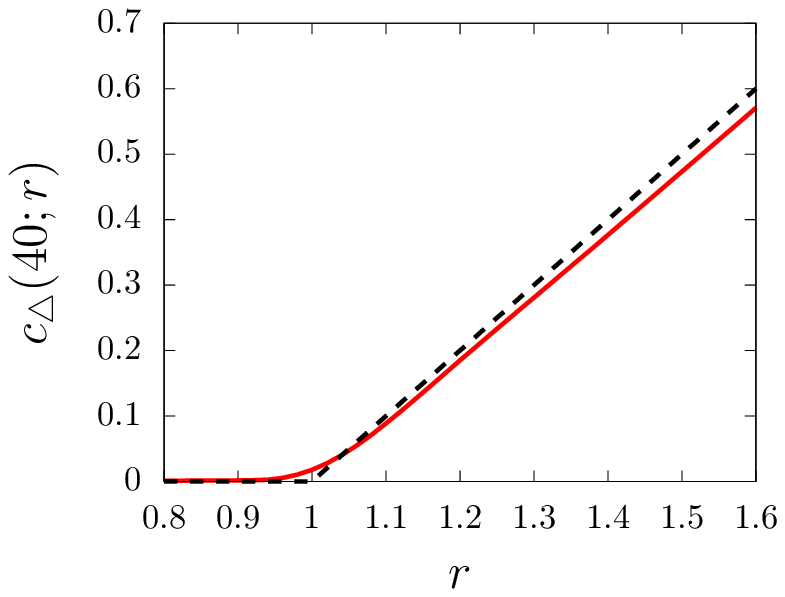}
  \includegraphics[scale=0.8]{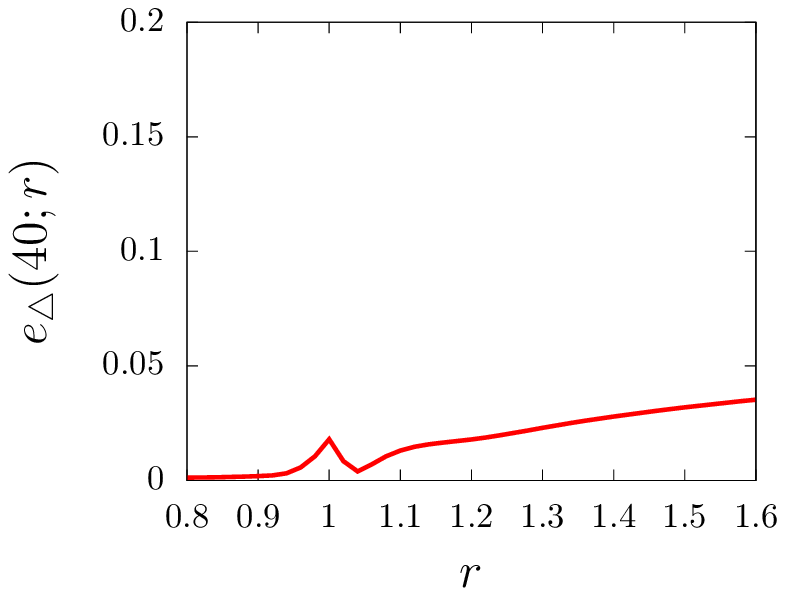}
  \caption{Graph of the numerical results
  $r \mapsto c_\triangle (t;r)$ (left)
  and the $L^2$ error
  $r \mapsto e_\triangle (t;r) := \| u (\cdot,t)/t - c(r) \|_{L^2}/(4R^2)$
  (right)
  at $t = 40$
  for the case \eqref{num ex1}.
  The dashed line in the left figure denotes $r \mapsto c(r)$.}
  \label{force1: r-rate}
 \end{center}
\end{figure}

In this case, we have a rigorous target as in \eqref{asym spd ex1}
so that we calculate the $L^2$ error
\[
 e_\triangle (t;r) = \frac{1}{(2R)^2}
 \left\| \frac{u(\cdot, t)}{t} - c(r) \right\|_{L^2}.
\]
The right figure of Figure \ref{force1: r-rate}
presents a graph of $r \mapsto e_\triangle (40;r)$,
from which one can find $c_\triangle (\cdot;r)$ is
very close to $c(r)$.
Figure \ref{force1: time-rate} shows the graphs
of $t \mapsto c_\triangle (t;r)$ for $r = 0.80$, $1.00$, $1.20$,
$1.40$ and $1.60$, which implies
$c_\triangle (t;r)$ seems to be converging to a 
constant monotonically for $t \gg 1$.
\begin{figure}[!htbp]
 \begin{center}
  \includegraphics[scale=0.8]{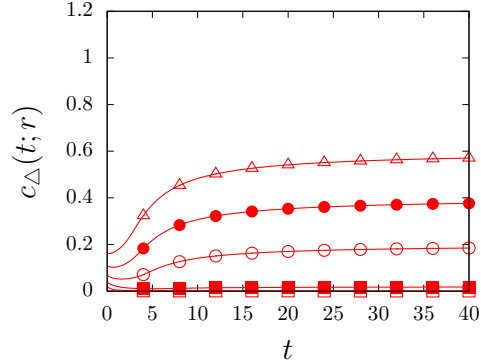}
  \caption{Graph of $t \mapsto c_\triangle (t;r)$
  for $r = 0.80$ (the line with $\square$),
  $1.00$ (with $\blacksquare$),
  $1.20$ (with $\circ$),
  $1.40$ (with $\bullet$)
  and $1.60$ (with $\triangle$).}
  \label{force1: time-rate}
 \end{center}
\end{figure}

Note that the critical value of $r$,
which is the maximum of $r$ satisfying $c_\triangle (t;r) \approx 0$,
is slightly different from $1.0$.
It is not because of the finite terminal time $T = 40$,
but the numerical error by
the discretization of \eqref{FMCF}
and approximation of the curvature term.
See Figure \ref{force1: time-rate_crit} which are
graphs of $t \mapsto c_\triangle (t;r)$
focusing up the results around $r = 1.0$.
One can find $t \mapsto c_\triangle (t;r)$
is clearly increasing for $t \gg 1$ if $r \ge 0.98$.
More precisely, our numerical data shows that
$t \mapsto c_\triangle (t;r)$
increases if $t \ge 5.04$ for $r = 1.00$,
$t \ge 7.16$ for $r = 0.98$,
and $t \ge 8.96$ for $r = 0.96$.
\begin{figure}[!htbp]
 \begin{center}
  \includegraphics[scale=0.8]{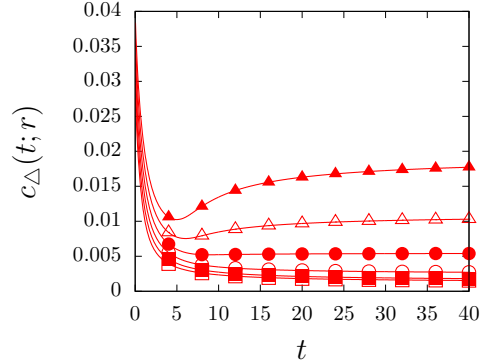}
  \caption{Graph of $t \mapsto c(t;r)$
  for $r = 0.90$ (the line with $\square$),
  $0.92$ (with $\blacksquare$),
  $0.94$ (with $\circ$),
  $0.96$ (with $\bullet$),
  $0.98$ (with $\triangle$),
  and $1.00$ (with $\blacktriangle$).
  The case $r \ge 0.98$ clearly shows the trend aforementioned.}
  \label{force1: time-rate_crit}
 \end{center}
\end{figure}


\par
\bigskip
\noindent
\textbf{Example 2. A square downward cone.}
We next examine the $\ell^1$-norm type force term 
\begin{equation}
 \label{num ex2}
  f(x) = \left(r - |x|_1 \right)_+ = \left( r - (|x_1| + |x_2|) \right)_+
  \quad \mbox{for} \ x = (x_1, x_2) \in \mathbb{R}^2.
\end{equation}
A discontinuous case of this example was first studied in \cite{GMT}. 
Also, see an interesting paper \cite{LD} of the crystal growth. 

In this case, we analytically have
\begin{numcases}
{c(r) = \lim_{t \to \infty} c(t;r)=} 
  = 0 & if  \ $r < 1$, \label{c1}\\
  > 0 & if \ $1 < r < \sqrt{2}$, \label{c2}\\
  \geq r - \sqrt{2} & if \ $r > \sqrt{2}$. \label{c3}
\end{numcases}
Indeed, by constructing a source function $g$ which is radially symmetric, 
and satisfies $f(x)\le g(x)$ for $x\in\R^2$, and using Theorem \ref{thm:f-radial} and the comparison principle, 
we obtain \eqref{c1}. 
Also, by Lemma \ref{lem:obstacle} and \cite[Section 6]{GMT}, we obtain 
\eqref{c2}. 
Finally, noting that 
\[
f(r)\ge (r-\sqrt{2}-\ep)\mathbf{1}_{\ol{B}(0,1+\ep)}, 
\]
for any $\ep\in(0,1)$,  
and using Theorem \ref{thm:f-radial} and the comparison principle again, 
we obtain inequality \eqref{c3}.  

As we see in the above, we only have quite partial information on $c(r)$. 
We want to understand the profile of $c(r)$ more, and with the aid 
of numerical simulation, we raise some concrete questions below.

Figure \ref{force2:rad-rate} presents a graph of
$r \mapsto c_\triangle (40;r)$ for $0.8 \le r \le 2.0$.
The chain line in the graph denotes 
the lower bound line $r \mapsto r - \sqrt{2}$.
One can find that $c_\triangle (40;r)$ looks like a line
for $1.6 \le r \le 2.0$.
We now calculate the fitting line of
$c_\triangle (40;r)$ by the least square method
with the data for $1.6 \le r \le 2.0$
and obtain
\[
 c_\triangle (40;r) \approx 0.966253713 \times r -1.238608703,
\]
which is drawn as the dashed line in Figure \ref{force2:rad-rate}. 

\medskip
\textbf{Question 1.} 
From the observation of Figure \ref{force2:rad-rate}, it is reasonable to raise 
the following questions: 
\begin{itemize}
\item[(i)] Are there $r_0>\sqrt{2}$ and $a<\sqrt{2}$ such that $c(r)=r-a$ for all $r\ge r_0$? 
\item[(ii)] Is the function $r\mapsto c(r)$ convex? 
\end{itemize}

\begin{figure}[!htbp]
 \begin{center}
  \includegraphics[scale=0.8]{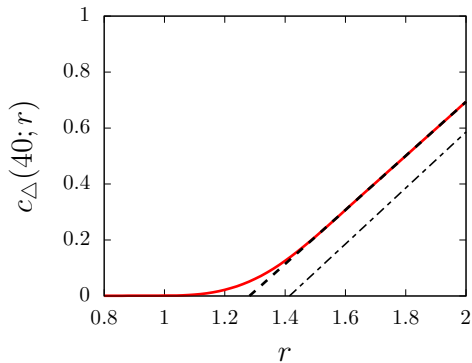}
 \end{center}
 \caption{Graph of $r \mapsto c_\triangle (40;r)$ 
 for \eqref{num ex2}.
 The dashed line means the fitting line
 of $c_\triangle (40;r)$ calculated with the data
 for $1.6 \le r \le 2.0$.
 The chain line means the lower bound of $c(r)$,
 which is $r \mapsto r - \sqrt{2}$.}
 \label{force2:rad-rate}
\end{figure}



\par
\bigskip
\noindent
\textbf{Example 3. Sum of two downward cones.}
We next consider the situation that the source term is given by the 
sum of two downward cones. 
From the physical point of view, it is important to understand the 
relation of the asymptotic speed and the distance between two cones.

Let us consider a representative example: 
\begin{equation}
 \label{force 5-6}
 f (x) = (R_0 - |x - (r,0)|)_+ + (R_0 - |x + (r,0)|)_+
\end{equation}
with a parameter $r > 0$ and a fixed constant $R_0 > 0$.
If $R_0 \in (1/2,1)$, then we have
\[
 c(r) = 0 \quad
 \mbox{if} \ r \in [0,1-R_0) \cup (R_0,\infty).
\]
On the other hand, if $R_0 > 1$,
then
\[
 c(r) =
 \left\{
 \begin{array}{ll}
  2 (R_0 - 1) & \mbox{if} \ r = 0, \\
  R_0 - 1 & \mbox{if} \ r > R_0.
 \end{array}
 \right.
\]
We examine the above cases with $R_0 = 0.8$ and $R_0 = 1.2$
to verify the mathematical results and
get some predictions for the cases
when $r$ falls within the unclear regime
in the above discussions.
Figure \ref{fig:force5-6} presents the numerical results 
of $r \mapsto c_\triangle (40;r)$
with $R_0 = 0.8$ and $1.2$.
In the right figure (case $R_0 = 1.2$) of Figure \ref{fig:force5-6},
we draw horizontal dashed lines at $c_\triangle = 0.2 = R_0 - 1$
and $c_\triangle = 0.4 = 2(R_0 - 1)$,
and vertical dashed line at $r = 1.2 = R_0$.
Our numerical results follow the mathematical ones.
In particular, one can find
$c_\triangle (40;r) \approx 0$ for $r < 1 - R_0$
if $R_0 = 0.8 \in (1/2,1)$,
and $c_\triangle (T;r) \approx R_0 - 1$ for $r > R_0$ if
$R_0 = 1.2$.
On the other hand, the profiles of $c_\triangle$
within the situation in the unclear regime are asymmetric.
The speed $c_\triangle$ grows slowly when the two sources
of  birth depart from the overlapping situations,
but reduces rapidly when the two sources reaches to
the far apart situations. 

\medskip
\textbf{Question 2.} 
From the observation of Figure \ref{fig:force5-6}, the following points are of interests.
\begin{itemize}
\item[(i)] 
Can we estimate the maximum value of $c(r)$? 
\item[(ii)] 
Does the function $r\mapsto c(r)$ have the maximum 
at only one point? 
\end{itemize}

\begin{figure}[htbp]
 \begin{center}
  \includegraphics[scale=0.8]{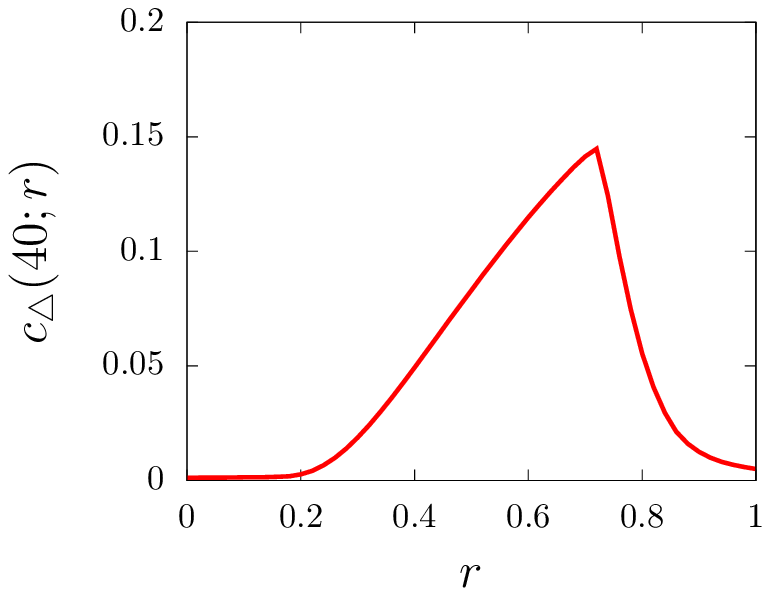}
  \includegraphics[scale=0.8]{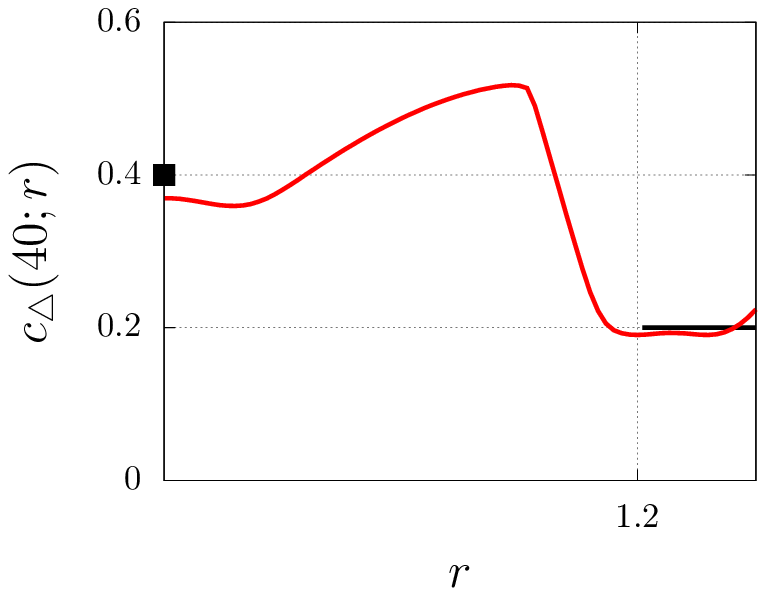}
  \caption{Graph of $c(40;r)$ for \eqref{force 5-6}
  with $R_0 = 0.8$ (left) and $R_0 = 1.2$ (right).
  In the left figure,
  the black dot and lines indicate the mathematical
  results, that is, $c = 2 (R_0 - 1)$ if $r = 0$
  and $c = R_0 - 1$ if $r > R_0$.
  }
  \label{fig:force5-6}
 \end{center}
\end{figure}

\if0

Asymmetric profile is also seen
in the case when
\begin{equation}
 \label{force7}
 f(x) = \max \{R_0 - |x-(r,0)|, R_0 - |x - (-r,0)|, 0 \} 
\end{equation}
with $R_0 > 1$.
For this case we have
\[
 \lim_{t \to \infty} c(t;r)
 = R_0 - 1 \quad \mbox{if} \ r \in \{ 0 \} \cup (R_0, \infty).
\]
We examine the above case with $R_0 = 1.2$ and
obtain the result as in Figure \ref{fig: ex7}.
More precisely, we observe the following properties.
$r \mapsto c (40;r)$ is
\begin{itemize}
 \item monotone increasing for $r \in [0,0.72]$,
       monotone decreasing for $r \in [0.72, 1.2]$,
       in particular,
       $r \mapsto c(t;r)$ is not
       symmetric with respect to $r = r^*$ 
       at where $c(t;r)$ attains its maximum,
 \item concave for $r \in [0.02, 0.82]$,
       convex for $r \in [0.84, 1.22]$.
\end{itemize}
\fi

%
%

\section{Conclusion}
In this paper, we have established the existence result, Theorem \ref{thm:main}, 
for asymptotic speed of solutions to nonlinear parabolic partial differential equations in a rather general setting. Typical equations which we have in our mind are birth and spread type partial differential equations which are derived by a continuum limit of a Trotter-Kato formula in Section \ref{sec:birth-spread}. 
Three concrete examples, first-order Hamilton-Jacobi equations, forced mean curvature flow, truncated inverse mean curvature flow, are considered as applications of a general framework established in Section \ref{sec:existence}. 

We next investigate qualitative properties of asymptotic speeds. If the front propagation in the horizontal direction is always monotone, that is, $V=g(n(x), \kap(x))>\del$ in \eqref{eq:surface} everywhere for some fixed $\del>0$, then the asymptotic speed is simply the maximum of the source term $f$.

On the other hand, if the front propagation in the horizontal direction is not monotone (e.g., \eqref{FMCF}), 
then a double nonlinear effect coming from the interaction of the nucleation and the surface evolution gives strong influence to the asymptotic speed. 
In the case that $f$ is radially symmetric, we obtain precise formula for the asymptotic speed as \eqref{FMCF} can be reduced to a Hamilton-Jacobi equation with a noncoercive and concave Hamiltonian, which is studied by the optimal control formula. 
In the non-radially symmetric setting, the behavior of solutions is much more involved by the  double nonlinear effect. 
We give several nontrivial properties of the asymptotic speed in this setting 
in Lemmas \ref{lem:c-less-M}, \ref{lem:c-M}, and \ref{lem:obstacle}. 

In Section \ref{sec:numerical}, we give numerical schemes for the forced mean curvature flow
\eqref{FMCF} and  a truncated inverse mean curvature flow \eqref{eq:truncate}, 
and numerical results on asymptotic speed for \eqref{FMCF}.
With the aid of numerical simulation, we raise several concrete questions to be studied in the future.

Finally, in Appendix below, we discuss a volcano formation model, and provide its numerics and also a bit background on inverse mean curvature flow.

\section{Appendix}
In Appendix, we consider \eqref{eq:truncate}, and discuss in the following a model of volcano formation, and some background on inverse mean curvature flow.

\subsection{An explicit solution to \eqref{eq:truncate}}\label{subsec:explicit-sol}
In this subsection, we consider \eqref{eq:truncate}
in the two dimensional setting ($n=2$) with the source of the form
\begin{equation}
 \label{force:single-crater}
 f(x) = \textbf{1}_{\ol{B} (0,R_0)}(x),
\end{equation}
where $R_0 > 0$ is a given constant.
Fix $u_0 \equiv 0$. 
Notice that since $f$ is not continuous, \eqref{f-con} does not hold. 
We construct here the maximal viscosity solution of \eqref{eq:truncate} by using the method in \cite{GMT}. 

Since both $f$ and $u_0$ are radially symmetric, it is reasonable to consider \eqref{eq:truncate} in the radially symmetric setting.
Assume $u(x,t)=\phi(|x|,t)$, and $f(x)=\tilde{f}(|x|)$ for all $x\in \R^2$, and $t \geq 0$, then $\phi$ satisfies
\[
\phi_t-\frac{|\phi_r|}{\chi\left(-\frac{\phi_r}{r|\phi_r|}\right)}=\tilde{f}(r)
\quad\text{ for }  (r,t) \in (0,\infty) \times (0,\infty). 
\]
Under an additional condition that $\phi_r(r,t)\le0$ for $r>0$, the above is simplified into 
\begin{equation}\label{eq:additional}
\phi_t+\frac{\phi_r}{\chi(1/r)}=\tilde{f}(r)
\quad\text{ for }  (r,t) \in (0,\infty) \times (0,\infty).  
\end{equation}
Set 
\[
H(p,r):=\frac{p}{\chi(1/r)}-\tilde{f}(r) \quad \text{ for }  (p,r) \in (-\infty,0]\times (0,\infty).  
\]
Then $p\mapsto H(p,r)$ is linear, hence both
convex and concave for all $r>0$. 
Therefore, we have both of the inf and sup stabilities of viscosity solutions to \eqref{eq:additional} (see \cite[Corollary 7.27]{LMT} for example). 

We fix $\Lam\ge \frac{1}{R_0}$ and $\lam \in (0,\Lam)$ sufficiently small. 
Set $T:= \log \Lam - \log \lam$, and define the function $\phi:[0,\infty)\times[0,\infty)\to\R$ as following.
For $t<T$,
\begin{equation*}
\phi(r,t):=
\left\{
\begin{array}{ll}
t & \text{for all} \ (r,t)\in [0,R_0]\times[0,T),  \\
\max\{t+\log (R_0/r), 0\} & \text{for all} \ (r,t)\in (R_0,\infty)\times[0,T). 
\end{array}
\right. 
\end{equation*}
Moreover, for $t \geq T$, set 
\begin{equation*}
\phi(r,t):=
\left\{
\begin{array}{ll}
t & \text{for all} \ (r,t)\in [0,R_0]\times[T,\infty),  \\
t+\log (R_0/r) & \text{for all} \ (r,t)\in (R_0,1/\lam]\times[T,\infty), \\
\max\{t-\lam r-T, 0\} & \text{for all} \ (r,t)\in (1/\lam,\infty)\times[T,\infty).  
\end{array}
\right. 
\end{equation*}
Notice here that, for $(x,t) \in \ol{B}(0,1/\lam) \times [0,\infty)$, we have
\begin{equation}\label{Fuji-shape}
u(x,t) =\phi(|x|,t)= \min\left\{t, \max \left\{0, t -\log|x| +\log R_0\right\} \right\}.
\end{equation}

It is not hard to check that this is the maximal viscosity solution to \eqref{eq:truncate} on $\R^2 \times [0,\infty)$ by approximating $f$ with a family of continuous functions.  
See \cite{GMT} for details.


%
%

\subsection{A volcano formation model}

The formation of shapes of volcanoes is often explained by a porous medium equation \cite{TS, BDD}.
 Let $h=h(x,t)$ be the height of volcano at a place $x\in\mathbb{R}^2$ and at time $t\in[0,\infty)$.
 A typical evolution of $h$ is modeled as a conservation of mass
\[
	h_t + \operatorname{div} (uh) = 0,
\]
where $u$ is given by Darcy's law
\[
	u = -Dp,
\]
where $p$ is the pressure.
A typical choice of the pressure is $h$ itself.
 Here we set all physical constants just one to clarify the argument.
 The resulting equation for the height is
\[
	h_t = \operatorname{div} (h D h)
\]
or equivalently, 
\[
	2 h_t = \Delta h^2,
\]
which is a particular form of the porous medium equation $h_t=\Delta h^m$.
 This model is proposed for example in \cite[Sections 9.5, 9.6]{TS}.
 In \cite{BDD}, a model with $m=4$ is also proposed.
 To grow a volcano by eruption, we need external supply terms.
 One possible idea is to consider
\[
	h_t = \operatorname{div} (h D h) + \kappa\delta
\]
with $\kappa>0$, where $\delta$ is the Dirac delta function, which is equivalent to give a singular Neumann data at the origin (as in \cite{TS}) when one considers axisymmetric solution.
 In \cite{TS}, a radial self-similar solution of the form $h(x,t)=f\left(x/\sqrt{t}\right)$ is proposed to explain the shape of a volcano.
 Near $x=0$, it is like $h^2 \sim -\log|x|$, a logarithmic shape which looks similar to the shape of a volcano.

However, according to real observation of stratovolcanoes whose shapes are roughly circular cones, this model seems to be insufficient to explain the shape.
 In 1878, a geoscientist J.\ Milne \cite{M1878} measured several volcanoes in Japan including Mt.\ Fuji and observed that the height of each is a logarithmic function from the crater at least in the mountain's breast.
 More precisely, if the crater is located at the origin, then
\[
	h \sim - c_1 \log|x| + c_2
\]
for ``middle range'' of $|x|$, where $c_1$ and $c_2$ are positive constants.
 Much more modern observation is done in \cite{FN} for the shape of Mt.\ Iwate, Japan.
 By these observations, it is rather clear that the model by porous medium equations is not sufficient because $h^2$ is logarithmic, not $h$ itself.

There is an earlier theoretical explanation given by G.\ F.\ Becker \cite{B1885}.
 Let $y$ be the radius at the given height $h$ assuming that a volcano is axisymmetric.
 He proposed that the shape of each volcano satisfies the least variable resistance, which is a minimizer of
\[
	\int^{h_0}_0 \left( y^2 + \alpha(y')^2 \right) \,dh,
\]
where $h_0$ is the height of the volcano and $\alpha$ is a positive constant.
 In \cite{B1885}, the sign in front of $\alpha$ is taken in a wrong way and zero of the region of integration is taken as $h$.
 However, it is not clear why such shape is kept during evolution (eruption).

Our truncated inverse mean curvature flow model does not have physical basis so far, but as discussed in Subsection \ref{subsec:explicit-sol}, when $f=\mathbf{1}_{\ol{B}(0,R_0)}$, \eqref{eq:truncate} has a solution $h = -\log|x| + t+ \log R_0$ in the middle range of $|x|$ (see \eqref{Fuji-shape}), which fits well with what was observed in \cite{M1878, FN}.

 Thus, we propose our equation \eqref{eq:truncate}  as a model of volcano's evolution.
 The inverse mean curvature flow on each level set describes a spreading effect because of the highly viscous lavas while source term represents new eruption, which is supposed to occur regularly like Mt.\ Fuji.

\subsection{Numerical results for the volcano formation model}\label{subsec:IMC}

In this section we give numerical results for \eqref{eq:truncate} with a discretization \eqref{discretized tr-eq}. 
We first consider the function $f$ given by \eqref{force:single-crater}. 
Figure \ref{fig: volcano formation} presents 
the profiles of $u$ and the difference $|u - \phi|$
at $t = 1.25$, and $t=2.50$
with the source size $R_0 = 0.20$ and 
the cut-off parameters $\lambda = 0.5$
and $\Lambda = 0.202 = 1.01R_0$, 
where $u$ is the function computed numerically with \eqref{discretized tr-eq} and 
$\phi$ is the function defined in Section \ref{subsec:explicit-sol}. 
The spatial domain parameters of the calculation
are chosen as $R = 2.56$, and $N=128$, then $\Delta x = 0.02$.
The time span is $\Delta t = 0.025 \times \Delta x^2$.
Our numerical result is very close to the
target $\phi$.
\begin{figure}[htbp]
 \begin{center}
  \includegraphics[scale=1.0]{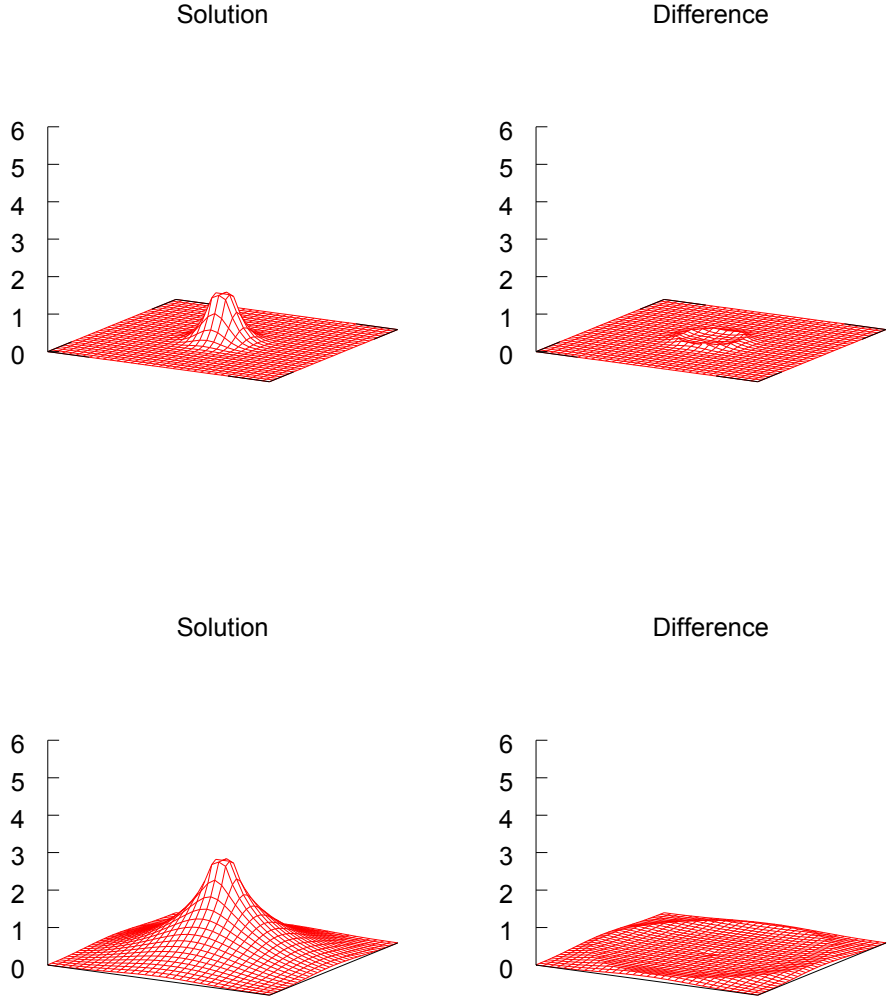}
  \caption{Profile of the solution $u$
  to \eqref{discretized tr-eq} with 
  \eqref{force:single-crater}
  and the difference $|u - \phi|$
  with $\lambda = 0.5$ at $t = 1.25$ (top) 
  and $t=2.5$ (bottom).}
  \label{fig: volcano formation}
 \end{center}
\end{figure}

One may naively think that sending $\lam\to0$, and $\Lam\to\infty$ yields 
\begin{equation} \label{inv-mcf}
  u_t - |D u|
  \left\{
   - \mathrm{div} \left( \frac{D u}{|D u|} \right)
   \right\}^{-1}
  = f
 \quad \mbox{in} \ \mathbb{R}^n \times (0,\infty),
\end{equation}
which is the inverse mean curvature flow equation with a source. 
Equation \eqref{inv-mcf} is also of particular interest.

We discuss here about the difference between \eqref{eq:truncate} and \eqref{inv-mcf}. 
First, if we consider a specific situation in Section \ref{subsec:explicit-sol}, then we realize 
that the truncation from above does not have any effect. 
In general, since the mean curvature of the flow which moves following to \eqref{surface:t-inv} 
may become bigger than $\Lam$, the behavior of solutions to \eqref{eq:truncate} and \eqref{inv-mcf} are slightly different. 

On the other hand, the truncation from below has a subtle and mathematically interesting problem. We first notice that if we do a numerical simulation with a very small $\lambda$, then the difference $|u - \phi|$ is not small. We think that this is because of not only numerical instability but also inconsistency between the solution $u$ to \eqref{discretized tr-eq} and $\phi$ given by \eqref{Fuji-shape} when $\lambda$ is too small, and there might be a hidden reason of this type of inconsistency. When one considers the inverse curvature flow equation \eqref{inv-mcf} in the place with initial data consisting of two non-overlapping circle, a variational solution constructed by \cite{HI} may suddenly jump if two circles are close enough. In other words, the motion may not be local. Since we impose the Neumann boundary condition for numerical simulations, it is the same as periodic case so there are many circles of a positive level set. If these circles are too close it may jump suddenly to the level $c>0$ such that $\{ x \,:\, \ u(x,t) = c \} = \emptyset$. This might be a hidden reason why the numerical solution is not so close to the explicit solution $\phi$. It seems that, when $\lambda = 0$, then there might be a case that there exists no solution to \eqref{eq:truncate} continuous up to initial data even if initial data is continuous or even smooth.

In the simulation in Figure \ref{fig: volcano formation} we choose $\lambda=0.5$. To determine $\lambda = 0.5$ in our simulation, we consider the curvature of level sets $\{ u = c \}$ for every $c \in \mathbb{R}$ for equation \eqref{inv-mcf}. Note that $u$ is radially symmetric if $u_0$ is so. Then, we observe that the curvature of $\{ x  \,:\, u(x, t) = c \}$ for any $c \in \mathbb{R}$ and $t > 0$ is larger than $R \sqrt{2}$ in $[-R,R]^2$ provided that $r \mapsto u (re,t)$ for $e \in S^1$ is monotone decreasing. Hence, it suffices to choose $\lam$ to satisfy $\lam < R \sqrt{2}$ for verifying the volcano formation on $[-R,R]^2$. According to the above discussion and numerical simulations, we choose a fine parameter $\lambda = 0.5$ in this subsection, although we impose the Neumann boundary condition for numerical simulations.

Next, we give a numerical result of volcano formation with two craters. 
Figure \ref{fig: twin5a-profile} presents
a profile of the solution $u$ (left) to \eqref{inv-mcf},
and the difference $|u - \bar{\psi}|$ (right)
at $t=1.25$, $t=2.5$ with source size $R_0 = 0.20$,
cut-off parameters $\lambda = 0.5$, $\Lambda = 0.202 = 1.01 R_0$,
and
\begin{equation}
 \label{force: twin craters}
 f(x) = \textbf{1}_{\ol{B} (-a,R_0) \cup \ol{B}(a, R_0)},
 \quad a = (0.8,0), 
\end{equation}
which is calculated with \eqref{discretized tr-eq}.
The target $\tilde{\psi}$ in this case is chosen as
\begin{align*}
 \tilde{\psi} (x,t) & =
 \min \{ t,
 \max \{ 0, \psi_1 (x,t), \psi_2 (x,t) \} \}, \\
 \psi_1 (x,t) & = t - \log |x - (0.8,0)| + \log R_0, \\
 \psi_2 (x,t) & = t - \log |x + (0.8,0)| + \log R_0.  
\end{align*}

We define the function $\tilde{\psi}$ with an analogy of \eqref{Fuji-shape} and intuition, but we do not know yet whether $\tilde{\psi}$ is the viscosity solution to \eqref{eq:truncate} or not even for a short time. 

Our simulation shows that the solution $u$ is very close to $\bar{\psi}$.
\begin{figure}[htbp]
 \begin{center}
  \includegraphics[scale=1.0]{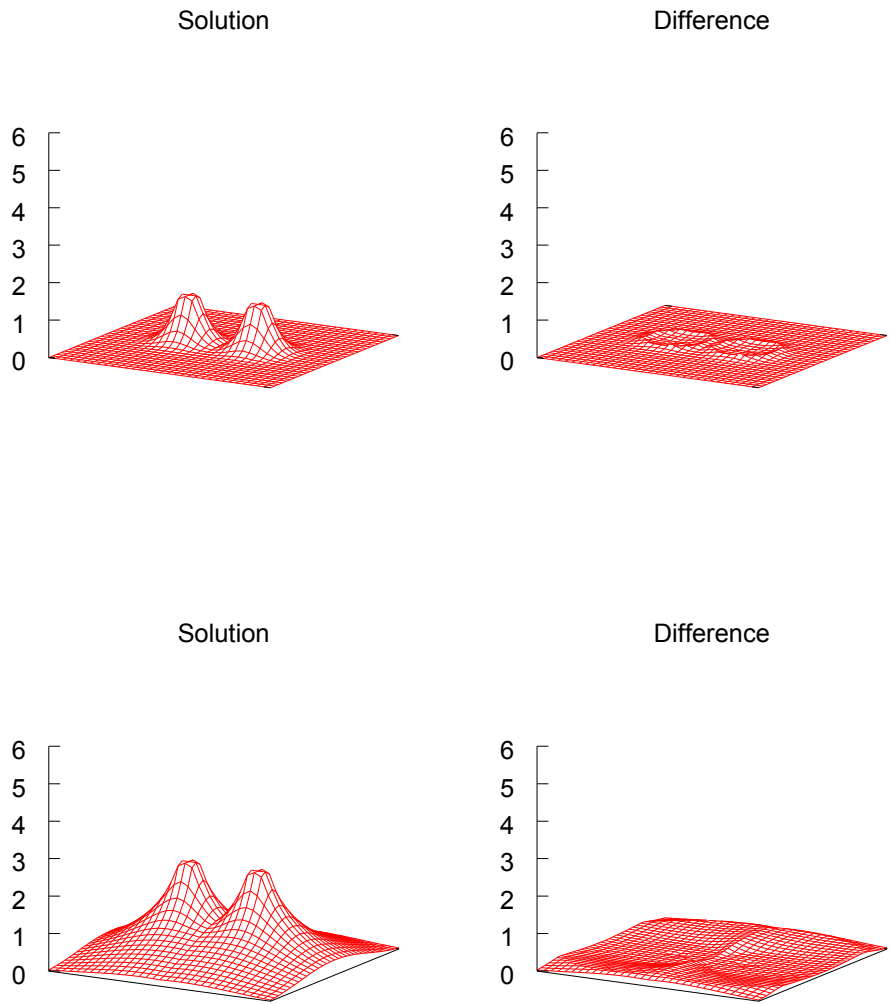}
  \caption{Profile of $u$ (left) 
  to \eqref{discretized tr-eq}
  with \eqref{force: twin craters}
  and $|u - \tilde{\psi}|$ (right)
  with $\lambda = 0.5$
  at $t=1.25$ (top) and $t=2.5$ (bottom).}
  \label{fig: twin5a-profile}
 \end{center}
\end{figure}

\subsection{Inverse mean curvature flow}
In case $f \equiv 0$, \eqref{inv-mcf} becomes
\begin{equation} \label{in-MCF}
	u_t - |D u| \left\{-\operatorname{div} \left(\frac{D u}{|D u|}\right)\right\}^{-1} = 0
	\quad\text{in}\quad	\mathbb{R}^n \times (0,\infty),
\end{equation}
which is the level set flow equation of the inverse mean curvature flow $V=-1/\kap$.
 The inverse mean curvature flow equation is an important tool to prove the Riemann Penrose inequality in general relativity.
 It asserts that the total mass $m_{ADM}$ (often called ADM mass \cite{ADM}) of an asymptotically flat three-dimensional Riemann manifold ($3$-manifold $M$) of nonnegative scalar curvature is bounded from below in terms of each smooth, compact outermost minimal surface in the $3$-manifold.
 An outermost minimal surface is a minimal surface which is not separated from infinity by any other compact minimal surface.
 Hawking \cite{Ha} introduced the Hawking quasi-local mass of a $2$-surface and observe that it approaches to the ADM mass for large coordinate spheres.
 For study of the Hawking mass, Geroch \cite{Ge} first introduced the inverse mean curvature flow and the Hawking mass (sometimes called Geroch mass) is monotone nondecreasing under this flow provided that the surface is connected and the scalar curvature of the ambient $3$-manifold $M$ is nonnegative.
 Jang and Wald \cite{JW} observed that if there was a classical solution of the inverse mean curvature flow starting at the inner boundary and converging to large coordinate sphere as the time tends to $\infty$, the monotonicity result would imply the Penrose inequality since the Hawking mass converges to the ADM mass.

Unfortunately, \eqref{in-MCF} may not have a classical solution.
 To realize this idea, Huisken and Ilmanen \cite{HI} introduced a notion of weak solution,
 which is formulated as a stationary type solution.
 In other words, we set $u(x,t)=v(x)-t$ in \eqref{in-MCF} and observe that $v$ solves 
\[
\begin{cases}
-\operatorname{div} \left( \frac{Dv}{|D v|} \right) = -|D v|\quad &\text{ in } \Omega \subset M,\\
v = 0	\quad &\text{ on } \partial\Omega,
\end{cases}
\]
where the initial surface is $\partial\Omega$, the boundary of a domain $\Omega$ containing the space infinity.
 They introduced a kind of variational weak solution and construct a globally-in-time weak solution having the monotonicity property of the Hawking mass when the initial surface is connected.
 This yields the Penrose inequality $m_{ADM} \geq \sqrt{|N|/16\pi}$, where $|N|$ is an area of connected component of $\partial M$ which is outermost minimal.
 Note that this philosophy is closely related to that in Remark \ref{rem:large-time} about large time behavior of $u$.
 Another proof for the Penrose inequality without assuming that $N$ is connected was given by Bray \cite{Br} by using a different method.

\bibliographystyle{amsplain}
\providecommand{\bysame}{\leavevmode\hbox to3em{\hrulefill}\thinspace}
\providecommand{\MR}{\relax\ifhmode\unskip\space\fi MR }
\providecommand{\MRhref}[2]{%
  \href{http://www.ams.org/mathscinet-getitem?mr=#1}{#2}
}
\providecommand{\href}[2]{#2}

\end{document}